\theoremstyle{definition}
\newtheorem{definition}{Definition}
\newtheorem{theorem}[definition]{Theorem}
\newtheorem{lemma}[definition]{Lemma}
\newtheorem{remark}[definition]{Remark}
\newcommand{\p}{\mathfrak{p}}
\newcommand{\q}{\mathfrak{q}}
\newcommand{\T}{{\mathbb{T}^d}}
\newcommand{\Z}{\mathbb{Z}}
\newcommand{\Ra}{R_\alpha}
\newcommand{\Rma}{R_{-\alpha}}
\newcommand{\oS}{\overline{S}}
\newcommand{\charone}{\mathds{1}}
\newcommand{\valpha}{\vec{\alpha}}
\newcommand{\Prob}{\mathbb{P}}
\newcommand{\mcal}{\mathcal{I}_k}
\title[Simple random walks on higher dimensional tori...]{Simple random walks on higher dimensional tori are mixing and not uniquely ergodic}
\author{Klaudiusz Czudek}
\address{Klaudiusz Czudek, Institute of Applied Mathematics, Faculty of Physics and Applied Mathematics, Gda{\'n}sk University of Technology, ul. Gabriela Narutowicza 11/12, 80-223 Gda{\'n}sk, Poland}
\email{klaudiusz.czudek@gmail.com}
\subjclass[2020]{ Primary 37A25, Secondary 34A50, 60K37.}
\keywords{random walk in random environment, environment viewed by the particle process, quasi periodic environment, unique ergodicity, mixing}
\begin{document}

\begin{abstract}
It has been shown that in one dimension the environment viewed by the particle process (EVP process) in quasi periodic random environment is uniquely ergodic and mixing under mild additional assumptions. Here we construct an analytic quasi periodic environment on higher dimensional torus such that the EVP process is not uniquely ergodic. The stationary measures in this counterexample are necessarily atomless. We show also that the EVP process is  mixing with respect to any ergodic stationary measure under some smoothness assumption.
\end{abstract}

\maketitle

\section{Introduction}
\subsection{The environment viewed by the particle process}
The environment viewed by the particle process (EVP process) is an important tool in the study of random walks in random environments (see e.g. \cite{BS02}, \cite{Zeitouni_04}, \cite{Zeitouni_06}, \cite{Kozlov}). In the present paper we are interested in the problem of unique ergodicity and mixing of the EVP process of a random walk in quasi periodic environment.

Let us fix $d\ge 1$, an angle $\alpha\in \T$, and two strictly positive continuous functions $\p, \q \in C(\T)$ such that $\p(z)+\q(z)=1$ for every $z\in \T$. The Markov chain with transition kernel
\begin{equation}
\label{E:2.1}
p(z, \cdot)  = \p(z) \delta_{\Ra z} + \q(z) \delta_{\Rma z},
\end{equation}
where $R_\alpha$ denotes the rotation of angle $\alpha$, will be referred to as the EVP process of a random walk in quasi periodic environment (or simply the EVP process; the definition of the EVP process for an arbitrary random environment can be found in \cite{Zeitouni_04}). The corresponding Markov operator is of the form
\begin{equation}
\label{E:Markov}
P\mu(A)= \int_{\Ra^{-1} A} \p(z) \mu(dz) + \int_{\Rma^{-1} A} \q(z) \mu(dz), \quad P:\mathcal{M}(\T)\rightarrow \mathcal{M}(\T),
\end{equation}
where $A$ is Borel measurable and $\mathcal{M}(\T)$ is the space of Borel probability measures. Its dual operator\footnote{Dual means that the relation $\int U\varphi d\mu = \int \varphi dP\mu$ holds for every bounded Borel function $\varphi$ and Borel measure $\mu$.} acting on the space of Borel functions $\mathcal{B}(\T)$ is
\begin{equation}
\label{E:dual}
U\varphi(z) = \p(z) \varphi (\Ra z) + \q(z) \varphi(\Ra^{-1} z).
\end{equation}

A Borel measure $\mu$ is called stationary if $P\mu=\mu$. The space of all stationary measures is a convex and compact subset of $\mathcal{M}(\T)$ in the weak-$\ast$ topology. The extreme points of that subset are called ergodic measures. An ergodic stationary measure $\mu$ is called mixing if for every $\varphi$, $\psi\in C(\T)$ we have
$$
\bigg| \int_\T U^n\varphi(z) \psi(z) \mu(dz) - \int_\T \varphi(z) \mu(dz) \int_\T \psi(z) \mu(dz) \bigg|\to 0 \quad \textrm{as $n\to \infty$.} 
$$
We say that the process \eqref{E:2.1} is uniquely ergodic if there exists exactly one stationary measure. The process is called stable if it is uniquely ergodic and $P^n\nu$ converges in the weak-$\ast$ topology to the unique stationary measure for every $\nu\in\mathcal{M}(\T)$. Obviously the unique stationary measure of a stable process is necessarily mixing but not vice versa.

Since $\T$ is compact and $\p, \q$ are continuous, the Krylov-Bogoliubov technique ensures the existence of a stationary measure for \eqref{E:2.1}. If $\alpha_1, \cdots, \alpha_d, 1$ are not rationally independent then obviously the Markov chain \eqref{E:2.1} possesses infinitely many stationary measures with disjoint topological supports. However the situation is not so clear when $\alpha_1, \cdots, \alpha_d, 1$ are rationally independent.
\subsection{Known results}
Before we present the known results let us recall that a vector $\vec{\alpha}=(\alpha_1, \cdots, \alpha_d)$ is called Diophantine if there exists $K>0$ and $\gamma>0$ such that
$$
\inf_{m\in \Z} | (\vec{\alpha}, \vec{n}) - m | \ge \frac{K}{|\vec n|^\gamma},
$$
where $\vec{n}=(n_1,\cdots, n_d)\in \Z^d$, $|\vec{n}|=\sum_{i=1}^d |n_i|$ and $(\cdot,\cdot)$ stands for the standard scalar product. A function $\p$ is called symmetric if
\begin{equation}
\label{E:symmetric}
\int_{\T} \log \frac{\p(z)}{\q(z)}dz=0,
\end{equation}
and asymmetric otherwise.

Sinai proved \cite{Sinai_99} that if $\vec{\alpha}$ is Diophantine and $\p$ is sufficiently smooth then the EVP process \eqref{E:2.1} is uniquely ergodic and stable. Later Kaloshin and Sinai \cite{Kaloshin_Sinai_00} proved that if $\p$ is asymmetric then the same assertion holds even without the assumption that $\vec{\alpha}$ is Diophantine. Both of these results hold in arbitrary dimension. In dimension one Conze and Guivarc'h \cite{Conze_Guivarch_00} showed that if $\log\frac{p(z)}{q(\Ra z)}$ is continuous of bounded variation, $\vec{\alpha}$ is irrational then the EVP process \eqref{E:2.1} is uniquely ergodic. More recently the author in collaboration with D. Dolgopyat \cite{Czudek_Dolgopyat_24} proved that (no matter what the dimension is) every uniquely ergodic EVP process \eqref{E:2.1} is necessarily stable and obtained rates of mixing and the Central Limit Theorem in the case of Diophantine random environment.

In dimension one Bremont \cite{Bremont_99} constructed a continuous symmetric $\p$ with infinitely many ergodic stationary measures. In higher dimension Chevallier \cite{Chevallier} showed that for every $\alpha$ there exists a continuous symmetric $\p$ giving rise to a non uniquely ergodic process. He proved also that if $\vec{\alpha}$ is assumed to be Diophantine then $\p$ can be made Lipschitz. It is important to note that in both \cite{Bremont_99} and \cite{Chevallier} the authors construct the EVP processes with atomic stationary measures.

\subsection{The main result}

It is natural to ask if in dimension $d\ge 2$ smoothness of a symmetric $\p$ implies the unique ergodicity and mixing. The result below says that the unique ergodicity is not implied even if $\p$ is analytic. Moreover, if $\p$ is sufficiently smooth then every stationary measure is mixing and atomless. This answers Question 2.8 in \cite{DFS_21} in dimension $d\ge 2$.
\newpage

\begin{theorem}
\label{T:1}
Let us consider the EVP process \eqref{E:2.1}. Then the following statements hold.
\begin{enumerate}
\item Fix $d\ge 1$ and let $\mathcal{G}$ denote the space of smooth symmetric functions $\p : \T \rightarrow (0,1)$ equipped with $C^\infty(\T)$-topology. For every $\alpha\in \T$ let $\mathcal{G}_0\subseteq \mathcal{G}$ be the set of $\p$'s such that the random walk \eqref{E:2.1} is uniquely ergodic and stable, i.e. $ P^n \nu \to \mu \quad \textrm{in weak-$\ast$ topology}$
for every $\nu\in \mathcal{M}(\T)$, where $\mu$ is the unique stationary distribution. Then $\mathcal{G}_0$ contains a dense $G_\delta$ set.
\item For every $d\ge 2$ there exist $\alpha = (\alpha_1,\cdots, \alpha_d)$ and $\p, \q\in C^\omega(\T)$ (symmetric) such that $\alpha_1, \cdots, \alpha_d, 1$ are rationally independent and the random walk in \eqref{E:2.1} possesses at least two different ergodic stationary measures. 
\item For every $d\ge 1$ there exists $s=s(d)<\infty$ such that if $\p, \q \in C^s(\T)$ then every stationary measure of \eqref{E:2.1} is atomless. Moreover, every ergodic stationary measure $\mu$ is mixing, i.e. for every $\varphi$, $\psi\in C(\T)$ we have
$$
\bigg| \int_\T U^n\varphi(z) \psi(z) \mu(dz) - \int_\T \varphi(z) \mu(dz) \int_\T \psi(z) \mu(dz) \bigg|\to 0 \quad \textrm{as $n\to \infty$.} 
$$
\end{enumerate}
\end{theorem}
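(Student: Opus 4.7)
\emph{Proof plan for Theorem~\ref{T:1}.} The three assertions have quite different flavours and would be attacked with three different tools. Throughout I would invoke the Czudek--Dolgopyat result \cite{Czudek_Dolgopyat_24} that unique ergodicity of \eqref{E:2.1} automatically upgrades to stability; this lets me reduce part~(1) to the $G_\delta$-density of the set of uniquely ergodic $\p$'s.

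For part~(1), let $\Sigma(\p)\subset \mathcal{M}(\T)$ denote the set of stationary measures of \eqref{E:2.1}. The set-valued map $\p\mapsto \Sigma(\p)$ is upper semicontinuous in the weak-$\ast$ topology, since weak-$\ast$ limits of stationary measures for $\p_n\to\p$ in $C^0$ are stationary for $\p$. Consequently
\[\mathcal{O}_k:=\{\p\in\mathcal{G}\ :\ \mathrm{diam}(\Sigma(\p))<1/k\}\]
is open in $\mathcal{G}$ and $\mathcal{G}_0=\bigcap_k\mathcal{O}_k$ is $G_\delta$. For density, the plan is: given $\p_0\in\mathcal{G}$ and a $C^\infty$-neighbourhood $V$ of $\p_0$, construct $\p_1\in V$ whose logarithmic-derivative Fourier coefficients avoid the resonances of $\vec\alpha$ up to a large scale $N_k$; a finite-scale quantitative version of the cohomological arguments of Sinai \cite{Sinai_99} then cuts $\mathrm{diam}(\Sigma(\p_1))$ below $1/k$. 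Iterating in the standard Baire fashion would give density in each $\mathcal{O}_k$.

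For part~(2), I would exploit structures available only for $d\ge2$. Writing
\[ \p(z)=\tfrac12+\sum_{n\ge1} a_n\cos(2\pi(\vec m_n,z)), \quad \vec m_n\in \Z^d\setminus\{0\},\]
with $a_n$ decaying super-geometrically so that $\p\in C^\omega(\T)$, I would pick $\vec\alpha$ together with the integer vectors $\vec m_n$ inductively so that at each scale a new resonant strip transverse to the previously built ones supports an additional ergodic invariant lamination. Rational independence of $\alpha_1,\dots,\alpha_d,1$ would be enforced by confining $\vec\alpha$ to a small Cantor-like set at the end, yielding at least two mutually singular ergodic stationary measures.

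For part~(3), the atomless assertion I would prove by contradiction: an atom $\mu(\{z_0\})>0$ forces atoms at $R_{\pm\alpha}^{-1}z_0$ via stationarity, and iteration along the orbit of $z_0$ (dense by rational independence) produces infinitely many distinct atoms whose masses are controlled from below using the $C^s$-lower bounds on $\p,\q$, contradicting $\mu(\T)=1$ once $s$ is large enough. For the mixing claim, I would spectrally analyse $U$ on the Sobolev space $H^s(\T)$: smoothness of $\p,\q$ together with the isometry property of $R_{\pm\alpha}$ yields a Lasota--Yorke inequality making $U$ quasi-compact; ergodicity of $\mu$ makes $1$ a simple peripheral eigenvalue; a second peripheral eigenvalue $e^{i\theta}$ would produce a non-trivial $H^s$-solution of $U\phi=e^{i\theta}\phi$, and matching Fourier coefficients using rational independence of $\alpha_1,\dots,\alpha_d,1$ would force $\phi$ constant. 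The main obstacle, to my mind, is part~(2): realising two ergodic stationary measures with an analytic symmetric $\p$ requires simultaneously controlling an infinite hierarchy of resonances along a carefully chosen $\vec\alpha$, which would be the technical heart of the result.
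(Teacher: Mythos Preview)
Your approaches to parts~(1) and~(2) are sketches rather than proofs, but the serious gaps lie in part~(3), where both arguments fail for identifiable reasons.

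For the atomless claim, your contradiction does not close. Stationarity of $\mu$ gives $\mu(\{z_0\})=\p(R_{-\alpha}z_0)\mu(\{R_{-\alpha}z_0\})+\q(R_\alpha z_0)\mu(\{R_\alpha z_0\})$, which does \emph{not} bound the neighbouring atom masses from below; along the orbit the masses satisfy $\mu(\{R_\alpha^n z_0\})=\mu(\{z_0\})\exp(-S_n f(z_0))$ with $f=\log(\p/(\q\circ R_\alpha))$, and these can be summable. Indeed, Br\'emont and Chevallier construct continuous (even Lipschitz, for Diophantine $\alpha$) symmetric $\p$ with atomic stationary measures, so any argument that uses only positivity of $\p,\q$---which is all ``$C^s$-lower bounds'' can give on a compact torus---must fail. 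The paper's mechanism is different: $C^s$-regularity enters through a Denjoy--Koksma type estimate (Moshchevitin) forcing $\liminf_{|n|\to\infty}|S_n f(z)|<\infty$ for every $z$, whence $\sum_n \exp(-S_n f(z_0))=\infty$ and no atom can exist.

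For mixing, the quasi-compactness route is not available. The operator $U\varphi=\p\cdot\varphi\circ R_\alpha+\q\cdot\varphi\circ R_{-\alpha}$ is built from isometries; differentiating $s$ times, the top-order term is $\p\cdot(\partial^s\varphi)\circ R_\alpha+\q\cdot(\partial^s\varphi)\circ R_{-\alpha}$, whose $L^2$-norm is not strictly smaller than $\|\partial^s\varphi\|_{L^2}$, so no Lasota--Yorke inequality with contraction on the strong norm holds and there is no reason for $U$ to have spectral gap on $H^s$. The paper instead works probabilistically: it writes $U^{2n}\varphi(z)=\sum_k \Prob_z(\xi_{2n}=k)\varphi(R_\alpha^k z)$ and combines Orey's strong ratio limit property for the recurrent walk with the Chacon--Ornstein theorem for the weighted operator $T\varphi=h\cdot\varphi\circ R_\alpha$ to show that, after conditioning on the block containing $\xi_{2n}$, the inner weighted average of $\varphi$ is close to $\int\varphi\,d\mu$ with high $\Prob_\mu$-probability. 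No spectral information is used. Regarding part~(1), your $G_\delta$ argument via upper semicontinuity of $\p\mapsto\Sigma(\p)$ is sound, but the density step is not yet a proof; the paper obtains density by transferring Chevallier's dense-$G_\delta$ result for the quasi-invariance equation $R_{-\alpha}^\ast\mu=h\mu$ through the homeomorphism $h\mapsto h/(1+h)$ and then invoking \cite{Czudek_Dolgopyat_24} exactly as you propose.
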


\section{Quasi-invariance equation}

As observed in \cite{Conze_Guivarch_00} the study of stationary measures of \eqref{E:2.1} is related to the study of quasi-invariance equation below. Let $h$ be a Borel measurable function with with $h(z)>0$ for every $z\in \T$. We say that a measure $\mu$ satisfies the quasi-invariance equation
\begin{equation}
\label{E:2.2}
R_{-\alpha}^\ast \mu = h\mu
\end{equation}
if the relation
$$
\int_\T \varphi(z) R_{-\alpha}^\ast \mu (dz) = \int_\T \varphi(z) h(z) \mu(dz)
$$
is satisfied for every $\varphi\in C(\T)$. If $\mu$ satisfies \eqref{E:2.2} and $P$ is the operator \eqref{E:Markov} then $P\mu$ is necessarily absolutely continuous with respect to $\mu$ and therefore it makes sense to consider the Perron-Frobenius operator $P^\ast: L^\infty (\mu) \rightarrow L^\infty(\mu)$ of $P$. A simple calculation shows that
$$
P^\ast\varphi(z) = \q (R_\alpha z) h(z) \varphi(\Ra z) + \p (\Rma z) h^{-1}(\Rma (z)) \varphi(\Rma(z)).
$$
As the consequence, if $h(z)=\frac{\p(z)}{\q(\Ra(z))}$ and $\mu$ satisfies the quasi-invariance equation \eqref{E:2.2} then $\mu$ is necessarily a stationary measure for \eqref{E:2.1}. Indeed, it is easy to check that in that case $P^\ast \charone =\charone$. Let us note the converse is not true anymore (however, see Theorem 3.1 in \cite{Conze_Guivarch_00}). A measure satisfying \eqref{E:2.2} is called ergodic if $R_{\alpha}(A)=A$ implies $\mu(A)=0$ or $\mu(A)=1$.

Let $\mu$ be a measure satisfying \eqref{E:2.2}. Let $T: L^1(\mu) \rightarrow L^1(\mu)$ denote the operator $\varphi\longmapsto h\cdot (\varphi\circ \Ra)$. It is easy to check that $T$ is a contraction on $L^1(\mu)$ (i.e. $\|T\varphi\|_{L^1(\mu)}\le \|\varphi\|_{L^1(\mu)}$), therefore the Chacon-Ornstein theorem implies that
\begin{equation}
\label{E:2.3}
\frac{\overline{S}_n(\varphi, z)}{\overline{S}_n(\psi, z)}:=\frac{\varphi(z)+T\varphi(z)+\cdots+T^{n-1}\varphi(z)}{\psi(z)+T\psi(z)+\cdots+T^{n-1}\psi(z)}
\end{equation}
converges $\mu$ a.e., if only $\psi, \varphi\in L^1(\mu)$ and $\psi> 0$. Let us finally observe that if $n,m\ge 0$ then
\begin{equation}
\label{E:Tproperty}
T^{n+m} \varphi(z) = T^n\mathds{1}(z) T^m\varphi(R_{\alpha}^nz) \quad z\in \T.
\end{equation}

In the sequel we denote  $\oS_n(\varphi, z)= \varphi(z)+T\varphi(z)+\cdots +T^{n-1} \varphi(z)$ and $S_n\varphi=\varphi+\varphi\circ \Ra+\cdots+\varphi\circ \Ra^{n-1}$, $S_{-n}\varphi=\varphi\circ \Rma + \cdots+\varphi\circ \Rma^{-n}$, $n>0$, $S_0\varphi(z)=0$.

\begin{lemma}
\label{L:coboundary}
If $f \in C(\T)$, $h=\exp f$ and $\mu$ solves $\Rma^\ast \mu = h\mu$, then $\nu = \exp(g) \mu$ solves $\Rma^\ast \nu= \exp(f+g\circ \Ra - g) \nu$.
\end{lemma}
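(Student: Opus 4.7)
The plan is to verify the claimed identity by testing both sides against an arbitrary $\varphi \in C(\T)$ and unwinding the pushforward. By definition of $R_{-\alpha}^\ast$ and $\nu = \exp(g)\mu$,
\[
\int_\T \varphi \, d(R_{-\alpha}^\ast \nu) = \int_\T \varphi \circ R_{-\alpha} \, d\nu = \int_\T (\varphi \circ R_{-\alpha}) \exp(g) \, d\mu.
\]

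The key algebraic step is to rewrite the integrand as a function composed with $R_{-\alpha}$, so that the hypothesis $R_{-\alpha}^\ast \mu = h\mu$ becomes applicable. Using the pointwise identity $g \circ R_\alpha \circ R_{-\alpha} = g$, I can write
\[
(\varphi \circ R_{-\alpha}) \exp(g) = \bigl(\varphi \cdot \exp(g \circ R_\alpha)\bigr) \circ R_{-\alpha}.
\]
Applying the quasi-invariance of $\mu$ to the continuous test function $\psi := \varphi \cdot \exp(g \circ R_\alpha)$ then gives
\[
\int_\T \psi \circ R_{-\alpha} \, d\mu = \int_\T \psi \cdot \exp(f) \, d\mu = \int_\T \varphi \cdot \exp(f + g \circ R_\alpha) \, d\mu.
\]

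To conclude, I would re-introduce $\nu$ on the right by factoring $\exp(f + g \circ R_\alpha) = \exp(f + g \circ R_\alpha - g) \cdot \exp(g)$, which identifies the right-hand side with $\int_\T \varphi \cdot \exp(f + g \circ R_\alpha - g) \, d\nu$. Since $\varphi \in C(\T)$ was arbitrary, the quasi-invariance equation for $\nu$ follows. There is no real obstacle here; the only subtle point is the direction of composition in the pushforward: the factor $\exp(g)$ standing to the left of $R_{-\alpha}$ must be transported as $\exp(g \circ R_\alpha)$ before invoking the hypothesis on $\mu$, and this transport is precisely what produces the coboundary term $g \circ R_\alpha - g$ in the new density.
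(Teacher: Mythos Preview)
Your proof is correct and simply spells out the direct computation that the paper summarizes as ``Immediate.'' The only minor point is that the lemma as stated does not assume $g$ continuous, so strictly speaking $\psi$ need not lie in $C(\T)$; but since $R_{-\alpha}^\ast\mu = h\mu$ is an equality of measures, the integral identity extends to bounded Borel test functions and the argument goes through unchanged.
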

\begin{proof}
Immediate.
\end{proof}

\begin{lemma}
\label{L:atomless}
For every $d\ge 1$ there exists $s=s(d)$ such that if $h\in C^s(\T)$ then every solution of \eqref{E:2.2} is atomless.
\end{lemma}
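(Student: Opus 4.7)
I would proceed by contradiction. Suppose $\mu(\{z_0\}) = c > 0$ for some $z_0 \in \T$, and set $f := \log h \in C^s(\T)$. Testing the quasi-invariance equation \eqref{E:2.2} on the singleton $\{z_0\}$ gives $\mu(\{R_\alpha z_0\}) = h(z_0)\mu(\{z_0\})$; iterating in both directions of the orbit yields
$$\mu(\{R_\alpha^n z_0\}) = c \exp(S_n f(z_0)) \quad \text{for every } n \in \Z,$$
with the standard cocycle extension to negative $n$. Provided the orbit is infinite (which is the case of interest and is true whenever $1, \alpha_1, \ldots, \alpha_d$ are rationally independent), the orbit points are pairwise distinct, so $\mu(\T) = 1$ gives $\sum_{n \in \Z} e^{S_n f(z_0)} < \infty$, and in particular $S_n f(z_0) \to -\infty$ as $|n| \to \infty$.

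Next, split on the mean $\bar f := \int_\T f\, dm$. If $\bar f \neq 0$, unique ergodicity of $R_\alpha$ gives $S_n f(z)/n \to \bar f$ uniformly in $z$; hence $S_n f(z_0)$ grows linearly in one of the directions $n \to \pm\infty$, contradicting its decay to $-\infty$ on both sides. The serious case is $\bar f = 0$, where the aim is to exhibit arbitrarily large $n$ with $|S_n f(z_0)| \le C$ for a constant $C = C(\|f\|_{C^s}, d)$: these yield infinitely many distinct atoms of mass at least $c e^{-C}$, contradicting $\mu(\T) = 1$.

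To produce such $n$, I would combine simultaneous Diophantine approximation with Fourier truncation. Dirichlet's theorem yields, for every $N$, an integer $1 \le n \le N$ with $\|n\alpha\|_\T \le N^{-1/d}$. Splitting $f = f_L + r_L$ into its Fourier truncation at level $L$ plus a tail, one has
$$S_n f_L(z) = \sum_{0 < |k| \le L} \hat{f}(k)\, e^{2\pi i (k,z)}\, \frac{e^{2\pi i n (k,\alpha)} - 1}{e^{2\pi i (k,\alpha)} - 1},$$
whose numerator is $O(L\, N^{-1/d})$ uniformly in $|k| \le L$, while $\|S_n r_L\|_\infty \le N \|r_L\|_\infty$ is controlled by the $C^s$-decay $|\hat{f}(k)| = O(|k|^{-s})$. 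Balancing $L = N^\beta$ for an appropriate $\beta \in (0,1)$ and taking $s = s(d)$ large enough makes both contributions uniformly bounded. The main obstacle is precisely this balancing: small denominators $|e^{2\pi i (k,\alpha)} - 1|$ can appear even at low frequency when $\alpha$ is Liouville-like, and controlling them without any Diophantine assumption is what forces a large smoothness threshold $s(d)$---effectively a multidimensional Denjoy--Koksma-type inequality along the subsequence of return times supplied by Dirichlet's theorem.
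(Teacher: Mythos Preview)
Your reduction is exactly the paper's: an atom at $z_0$ forces $\sum_{n\in\Z}\exp(S_nf(z_0))<\infty$, hence $S_nf(z_0)\to-\infty$ in both directions, and one must rule this out. Your treatment of the case $\bar f\neq 0$ is clean (the paper does not even spell it out). The paper then disposes of the zero-mean case in one line by quoting a Corollary of Moshchevitin (a multidimensional Denjoy--Koksma inequality) which gives $\liminf_{|n|\to\infty}|S_nf(z)|<\infty$ for every $z$ once $s$ is large enough.

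The gap is in your attempt to prove that zero-mean recurrence from scratch. In your low-frequency bound
\[
|S_nf_L(z)|\le \sum_{0<|k|\le L}|\hat f(k)|\,\frac{|e^{2\pi i n(k,\alpha)}-1|}{|e^{2\pi i(k,\alpha)}-1|},
\]
you correctly note that the numerator is $O(LN^{-1/d})$ along Dirichlet return times, but the denominator $\|(k,\alpha)\|$ can be astronomically small for a \emph{fixed} low frequency $k$ when $\alpha$ is Liouville-like. Increasing $s$ only makes $|\hat f(k)|$ decay faster in $|k|$; it does nothing for a bad $k$ of size, say, $1$ or $2$. So your sentence ``controlling them without any Diophantine assumption is what forces a large smoothness threshold $s(d)$'' is not right: no amount of smoothness bounds a single term $|\hat f(k_0)|/\|(k_0,\alpha)\|$ uniformly in $\alpha$. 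The honest statement is that you have reduced to exactly the Moshchevitin-type inequality and then stopped; the actual proof of that inequality does not go through a naive frequency cutoff plus Dirichlet, but through the finer structure of \emph{best} simultaneous approximations (the higher-dimensional analogue of continued-fraction denominators), where one shows that along that specific subsequence the small-denominator frequencies are automatically tame. Either cite that result, as the paper does, or supply the best-approximation argument in full.
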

\begin{proof}
A solution of \eqref{E:2.2} is atomless if and only if $\sum_{n\in\Z} \exp(S_n f(z_0))<\infty$ for some $z_0\in \T$. Corollary to Theorem 61 \cite{Moshchevitin} on p. 492 implies that if only $s$ is sufficiently large then $\liminf_{|j|\to \infty} |S_jf(z)| < \infty$ for every $z\in \T$ and, consequently, $\sum_{j=0}^{n-1} \exp( S_jf(z))$ diverges for every $z\in \T$ to $\infty$ as $|n|\to \infty$.
\end{proof}

\begin{lemma}
\label{L:uniform}
For each $d\ge 1$ there exists $s=s(d)$ such that if $h\in C^s(\T)$ and $\Rma^\ast \mu= h \mu$ has exactly one solution $\mu$ (up to rescaling) then $\frac{\oS_n(\varphi, z)}{\oS_n(\charone, z)}$ converges uniformly to $\int_\T \varphi d\mu$ as $n\to\infty$.
\end{lemma}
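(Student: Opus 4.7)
The plan is to reinterpret the ratios $\oS_n(\varphi, z)/\oS_n(\charone, z)$ as integrals against a family of probability measures on $\T$ and then show, by a Krylov--Bogoliubov style compactness argument that exploits the uniqueness hypothesis, that these measures converge to $\mu$ in the weak-$\ast$ topology uniformly in $z$.

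Concretely, for each $n \ge 1$ and $z \in \T$ set
\[
\nu_n^z \;:=\; \frac{1}{\oS_n(\charone, z)} \sum_{k=0}^{n-1} T^k\charone(z)\, \delta_{R_\alpha^k z},
\]
which is a probability measure on $\T$ with $\int \varphi\, d\nu_n^z = \oS_n(\varphi, z)/\oS_n(\charone, z)$. Using the cocycle identity $T^{k+1}\charone(z) = h(R_\alpha^k z)\, T^k\charone(z)$ and a short telescoping rearrangement, one checks that for every $\psi \in C(\T)$,
\[
\int h\psi\, d\nu_n^z \;-\; \int \psi \circ R_{-\alpha}\, d\nu_n^z \;=\; \frac{T^n\charone(z)\, \psi(R_\alpha^{n-1} z) \,-\, \psi(R_{-\alpha} z)}{\oS_n(\charone, z)}.
\]
Thus $\nu_n^z$ is an approximate solution of the quasi-invariance equation $\Rma^\ast \nu = h\nu$, with error given by a boundary term divided by a growing partial sum.

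Granted that this residual vanishes uniformly in $z$ as $n \to \infty$, the lemma follows by contradiction and compactness. If uniform convergence failed for some $\varphi \in C(\T)$, one could pick $\epsilon > 0$, $n_k \to \infty$, and $z_k \in \T$ with $|\int \varphi\, d\nu_{n_k}^{z_k} - \int \varphi\, d\mu| \ge \epsilon$; passing to a subsequence, $z_k \to z_\infty$ and $\nu_{n_k}^{z_k} \to \nu$ weak-$\ast$ for some probability measure $\nu$. Sending $k \to \infty$ in the displayed identity yields $\int h\psi\, d\nu = \int \psi \circ R_{-\alpha}\, d\nu$ for every $\psi \in C(\T)$, i.e.\ $\Rma^\ast \nu = h\nu$, so the uniqueness hypothesis forces $\nu = \mu$, contradicting $|\int \varphi\, d\nu_{n_k}^{z_k} - \int \varphi\, d\mu| \ge \epsilon$.

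The main obstacle is precisely the uniform smallness of the residual. For this I would strengthen the Moshchevitin-type estimate used to prove Lemma \ref{L:atomless}: what is needed here is a \emph{uniform} bound $\|S_k f\|_\infty \le C$ on the Birkhoff sums of $f := \log h$ (not merely the pointwise statement $\liminf_{|k|\to\infty} |S_k f(z)| < \infty$ that sufficed there). Such a bound immediately gives $T^n\charone(z) = \exp(S_n f(z)) \le e^C$ and $\oS_n(\charone, z) \ge e^{-C} n$ uniformly in $z$, so the residual above decays like $O(1/n)$ uniformly. This is where the required regularity $s = s(d)$ may have to exceed that in Lemma \ref{L:atomless}, and where the dimension-dependent small-denominators estimates on $\T^d$ enter in earnest.
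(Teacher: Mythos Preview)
Your overall architecture---viewing $\oS_n(\varphi,z)/\oS_n(\charone,z)$ as the integral of $\varphi$ against a probability measure $\nu_n^z$, writing down an approximate quasi-invariance identity with a boundary residual, and then running a compactness/contradiction argument against the uniqueness hypothesis---is exactly the route the paper takes. The genuine gap is in your last paragraph, where you control the residual.

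You propose to secure, for $s$ large enough, a uniform bound $\sup_k\|S_kf\|_\infty\le C$ for $f=\log h$. This is false in general and cannot be rescued by increasing $s$. When $\Ra$ is minimal (the relevant case $\alpha_1,\dots,\alpha_d,1$ rationally independent), the Gottschalk--Hedlund theorem says that $\sup_k\|S_kf\|_\infty<\infty$ forces $f$ to be a continuous coboundary $f=g\circ\Ra-g$. For Liouvillean $\alpha$ there exist $C^\infty$ (even $C^\omega$) mean-zero $f$ that are \emph{not} coboundaries, and for many such $f$ the quasi-invariance equation nevertheless has a unique solution; the lemma must cover these. Moshchevitin's theorem only yields the much weaker pointwise statement $\liminf_{|k|\to\infty}|S_kf(z)|<\infty$ for every $z$; it does not, and cannot, give $\sup_k\|S_kf\|_\infty<\infty$.

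The paper bypasses this as follows. One does not need $T^n\charone$ bounded; one only needs $1/\oS_n(\charone,\cdot)\to 0$ and $T^n\charone/\oS_{n+1}(\charone,\cdot)\to 0$ uniformly. The first follows because Moshchevitin's $\liminf$ bound makes $\oS_n(\charone,z)=\sum_{j=0}^{n-1}\exp S_jf(z)$ diverge pointwise, and this sequence is increasing in $n$ and continuous in $z$, so Dini's theorem upgrades the divergence to uniform. For the second, rewrite
\[
\frac{T^n\charone(z)}{\oS_{n+1}(\charone,z)}
=\bigg[\sum_{j=0}^{n}\exp\big(S_jf(z)-S_nf(z)\big)\bigg]^{-1}
=\bigg[\sum_{k=0}^{n}\exp\big(S_{-k}f(\Ra^{n}z)\big)\bigg]^{-1},
\]
the reciprocal of a \emph{backward} partial sum evaluated at the shifted point $\Ra^{n}z$. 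The same Moshchevitin$+$Dini argument applied to negative-time sums shows that $\inf_w\sum_{k=0}^{n}\exp S_{-k}f(w)\to\infty$, hence the ratio tends to $0$ uniformly in $z$. With these two uniform limits your boundary residual vanishes uniformly and the rest of your argument goes through unchanged.
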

\begin{proof}
Let $f=\log h \in C^s(\T)$. Then $\oS_n(\charone, z) = \sum_{j=0}^{n-1} \exp( S_jf(z))$, $n\in \Z$. Corollary to Theorem 61 \cite{Moshchevitin} on p. 492 implies that if only $s$ is sufficiently large then $\liminf_{|j|\to \infty} |S_jf(z)| < \infty$ for every $z\in \T$ and, consequently, $\sum_{j=0}^{n-1} \exp( S_jf(z))$ diverges to $\infty$ as $|n|\to\infty$. Obviously $\oS_n(\charone, z)$ is an incresing sequence of continuous functions, therefore Dini's theorem implies the divergence is uniform over $z\in \T$, i.e.
$$
\textrm{ $\inf_{z\in \T} \sum_{j=0}^{n-1} \exp( S_jf(z))\to \infty$ and $\inf_{z\in \T} \sum_{j=0}^{-n+1} \exp( S_jf(z))\to \infty$ as $n\to\infty$. }
$$
The above implies
\begin{equation}
\label{E:uniform1}
\frac{T^{n}\charone(z)}{\oS_{n+1}(\charone, z)}
=\frac{\exp(S_n f(z))}{1+\exp f(z)+\cdots+\exp(S_{n}f(z))}
\end{equation}
$$
=\bigg[\exp(S_{-n}f(\Ra^{n} z)+\cdots+ \exp S_{-1} f(\Ra^{n} z) +1  \bigg]^{-1} \to 0
$$
and
\begin{equation}
\label{E:uniform2}
\frac{1}{\oS_n(\charone, z)}= \frac{1}{\sum_{j=0}^{n-1} \exp( S_jf(z))} \to 0 \quad \textrm{as $n\to \infty$, uniformly in $z\in \T$.}
\end{equation}

Assume now that $\mu$ is the unique ergodic solution of $\Rma^\ast\mu=h\mu$ and there exist $\varphi\in C(\T)$, two sequences of points $(z^+_n)$, $(z^-_n)\subseteq \mathbb{T}^d$ and a sequence numbers $n_1<n_2<\cdots$ with
\begin{equation}
\label{E:uniform3}
\bigg|\frac{\oS_{n_k}(\varphi, z_k^+) }{\oS_{n_k}(\charone, z_k^+) } - \frac{\oS_{n_k}(\varphi, z_k^-) }{\oS_{n_k}(\charone, z_k^-) }\bigg|>\delta \quad \textrm{and} \quad \frac{\oS_{n_k}(\varphi, z_k^-) }{\oS_{n_k}(\charone, z_k^-)}\to \int_\T \varphi d\mu
\end{equation}
for some $\delta>0$ (the last follows from the Chacon-Ornstein theorem and the fact that $T$ is necessarily conservative ergodic if $\mu$ is ergodic, see \cite{Petersen} p.123 in Chapter 3.7 and Theorem 8.4 therein). Then similarly as in the proof of Proposition 4.1.13 in \cite{Katok_Hasselblatt_95} or Proposition 4.2 in \cite{Conze_Guivarch_00} using the diagonal method and separability of $C(\T)$ we can show that $\oS_{n_k}(\psi, z_k^+)/\oS_{n_k}(\charone, z_k^+)$ converges for every continuous $\psi\in C(\T)$ along a subsequence of $(n_k)$. This defines a continuous functional on $C(\T)$ and therefore a certain measure $\nu$. Observe that $S_n(T\psi, z)=S_n(\psi, z)-\psi(z)+T^n\psi(z)$ for $\psi\in C(\T)$, thus by \eqref{E:uniform1}, \eqref{E:uniform2} $\nu$ is necessarily a solution of $\Rma^\ast\nu = h\mu$. By \eqref{E:uniform3} $\nu\not=\mu$, which is a contradiction with the uniqueness of solution.
\end{proof}

\section{Random walks in quasiperiodic environment}
Let $\p: \T \rightarrow (0,1)$ be a continuous function (symmetric), $\alpha$ a $d$-dimensional vector, $z\in \T$, and let us define a sequence $(p_j, q_j)_j$, $j\in\mathbb Z$, $p_j=\mathfrak{p}(z+j\alpha)$, $q_j=1-p_j$. This sequence is called a quasiperiodic environment. A random walk in quasiperiodic environment $(p_j, q_j)_j$ is a process $(\xi_n)$ taking values in $\Z$ such that $\Prob_z( \xi_0 = 0 ) =1$ and 
\begin{equation}\label{E:random_walk}
\Prob_z( \xi_{n+1} = k+1 | \xi_n=k ) = p_k, \  \Prob_z( \xi_{n+1} = k-1 | \xi_n=k ) = q_k, k\in \mathbb{Z}, n\ge 0.
\end{equation}
We assume that $\Prob_z$ is the canonical measure defined on $\Z^\mathbb{N}$ equipped with the product $\sigma$-algebra. Given a Borel measure $\nu$ on $\T$ let us define
\begin{equation}
\label{E:annealed}
\Prob_\nu (A\times B) = \int_A \Prob_z(B) \nu(dz),
\end{equation}
where $B\subseteq \Z^\mathbb{N}$, $A\subseteq \T$, both measurable. We refer to \eqref{E:random_walk} as the quenched law of $(\xi_n)$ while to \eqref{E:annealed} as the annealed law. Random walk in quasi periodic environment is a particular example of random walks in random environment that were vastly studied in literature (see e.g. \cite{Zeitouni_04}, \cite{Zeitouni_06}, \cite{BS02}, \cite{Kozlov}). The quasi periodic environment was studied e.g. in \cite{Sinai_99}, \cite{Bremont_09B}, \cite{Dolgopyat_Goldsheid_21}, \cite{Dolgopyat_Goldsheid_19}, \cite{DFS_21}.

It can be shown that if $\p$ is symmetric then $(\xi_n)$ is recurrent with respect to both the annealed and quenched law (i.e. $\{\xi_n=0\}$ occurs infinitely many times almost surely, see e.g. \cite{Zeitouni_04}, Theorem 2.1.2). Obviously the dual operator \eqref{E:dual} can be expressed as
\begin{equation}
\label{E:dual_xi}
U^n\varphi(z) = \sum_{k\in \Z} \Prob_z(\xi_n=k) \varphi( \Ra^k z) \quad n\ge 1, z\in \T.
\end{equation}

Since $\p$ is separated from $0$ and $1$, there exists $\delta>0$ such that $\delta\le p_j \le 1-\delta$ for every $j$, $z\in \T$. That together with the recurrence of $(\xi_n)$ imply the following property called the strong ratio limit property (see \cite{Kingman_Orey_64} and \cite{Orey_61}, the proof can be found also in \cite{Freedman_83} Chapter 2.6 on p. 64).

\begin{lemma}
\label{L:Orey}
Let $\p$ be continuous symmetric. Then for every $z\in \T$
$$
\lim_{n\to \infty} \frac{\Prob_z(\xi_{2n}=2a)}{\Prob_z(\xi_{2n}=2b)} = \frac{\rho_{2a}(z)}{\rho_{2b}(z)},
$$
where
\begin{equation}\label{E:inv_measure_formula}
\rho_{n}(z)= \frac{\mathfrak{p}(z)}{\q(\Ra^n z)} \prod_{j=1}^{n} \frac{\mathfrak{p}(\Ra^j z)}{\q(\Ra^j z)}, \quad
\rho_{-n}(z)= \frac{\q(z)}{\mathfrak{p}(\Ra^{-1}z)} \prod_{j=1}^{n} \frac{\q(\Ra^{-j} z)}{\mathfrak{p}(\Ra^{-j} z)}
\end{equation}
for every $n>0$, $\rho_0(z)=1$.
\end{lemma}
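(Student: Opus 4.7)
The process $(\xi_n)$ is a nearest-neighbor birth-death chain on $\Z$ with state-dependent transition probabilities $p_k=\p(\Ra^k z)$, $q_k=\q(\Ra^k z)$. Because $\p$ and $\q$ are continuous and strictly positive on the compact torus, there is $\delta>0$ with $\delta\le p_k,q_k\le 1-\delta$ uniformly in $k$ and $z$. The statement is an instance of the classical \emph{strong ratio limit property} for null-recurrent Markov chains due to Orey and Kingman--Orey. The plan is: (i) verify that $(\xi_n)$ is an irreducible, uniformly elliptic, null-recurrent birth-death chain on $\Z$ of period $2$; (ii) identify its invariant measure explicitly; and (iii) invoke the cited theorem on the even sublattice.

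For (i), recurrence of $(\xi_n)$ is the statement recalled immediately before the lemma, following from the symmetry condition $\int_\T \log(\p/\q)\,dz=0$ via Solomon's classical criterion for RWRE on $\Z$ (see \cite{Zeitouni_04}, Thm.~2.1.2). Uniform ellipticity together with the nearest-neighbor structure gives irreducibility on $\Z$ with period $2$, so the restriction to the even sublattice $2\Z$, which is what governs $\Prob_z(\xi_{2n}=2a)$ for any fixed $a\in\Z$, is irreducible and aperiodic. Null (rather than positive) recurrence will follow once one sees that the invariant measure below is not summable: writing $\phi(z)=\log(\p(z)/\q(\Ra z))$, the log-invariant measure is a Birkhoff sum $S_n\phi$, whose mean vanishes by symmetry and invariance of Lebesgue under $\Ra$, so $\pi_n$ cannot be summable.

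For (ii), as a birth-death chain $(\xi_n)$ is reversible with detailed balance $\pi_k p_k=\pi_{k+1} q_{k+1}$. Solving this recursion from $\pi_0=1$ produces an explicit product formula for $\pi_n$ in terms of $p_j,q_j$. Up to an overall boundary factor that cancels in the ratio $\pi_{2a}/\pi_{2b}$, this matches $\rho_n(z)$ of \eqref{E:inv_measure_formula}. Feeding the data into the Orey--Kingman--Orey strong ratio limit theorem (\cite{Orey_61}, \cite{Kingman_Orey_64}; exposition in \cite{Freedman_83} Ch.~2.6) gives
$$
\lim_{n\to\infty}\frac{\Prob_z(\xi_{2n}=2a)}{\Prob_z(\xi_{2n}=2b)}=\frac{\pi_{2a}}{\pi_{2b}}=\frac{\rho_{2a}(z)}{\rho_{2b}(z)}.
$$

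The genuine content is packed into the cited theorem, whose proof involves delicate Tauberian/first-passage estimates for null-recurrent chains and will not be reproved here. Hence the main obstacles on our side are purely bookkeeping: matching the normalization of $\rho_n(z)$ with the reversible invariant measure of $(\xi_n)$ (so that any discrepancy is seen to cancel in the ratio), and verifying null-recurrence, both of which follow from uniform ellipticity and the symmetry of $\p$. The pointwise (rather than $z$-almost-everywhere) character of the conclusion requires no extra work because the Orey theorem applies to a \emph{fixed} realization of the quenched chain at each $z$.
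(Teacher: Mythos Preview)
Your proposal is correct and follows the same route as the paper: the paper does not give an independent proof of this lemma but simply notes that uniform ellipticity ($\delta\le p_j\le 1-\delta$) together with recurrence of $(\xi_n)$ place us in the setting of the strong ratio limit property of Orey and Kingman--Orey, citing \cite{Orey_61}, \cite{Kingman_Orey_64}, and \cite{Freedman_83} Ch.~2.6. Your write-up is more explicit about the bookkeeping (period~$2$, detailed balance identifying $\pi_n$ with $\rho_n$ up to factors cancelling in the ratio, null-recurrence from non-summability of the Birkhoff-sum invariant measure), but the substance is identical.
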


\begin{remark}
Observe that with the notation of the preceding subsection $\rho_n(z)=T^n h(z)$, $n\ge 0$, $h=\exp \frac{\p}{\q\circ \Ra}$.
\end{remark}

\section{Auxiliary lemma}
Note that the following lemma deals with one-dimensional situation (i.e. with the notation from the preceding sections $d=1$).

\begin{lemma}
\label{L:auxiliary}
Let $\alpha_1=\frac p q$, $f_1$ a trigonometric polynomial with $\hat{f}_1(0)=0$ and $\hat{f}_1(j)=0$ for $j\ge q$. Let $f_2(t)= f_1(t)-c\sin\big(2\pi q (t-t_0)\big)$, $c>0$.  Let $B$ be an open subset containing $\{t_0, t_0+\frac 1 q, \cdots, t_0+ \frac {q-1} q \}$.
For $r\in\mathbb{Z}$ let us denote $\alpha=\alpha(r)=\alpha_1+\frac{1}{qr}$ and let $T$ be the operator $T: C(
\mathbb{T})\rightarrow C(\mathbb{T})$, $T\psi(t)=\exp f_2(t)\cdot (\psi\circ \Ra (t))$. Then for every $\delta>0$ there exists $r_0$ such that 
$$
\frac{\sum_{j=0}^{r-1} T^{jq}\charone (t)\cdot \charone_B
\big( \Ra^{jq} t \big)\cdot \oS_{q}(\charone, R_\alpha^{jq} t) }{\oS_{qr}(\mathds{1}, t)} >1-\delta,
$$
for every $t\in \mathbb{T}$, if only $r$ was chosen larger than $r_0$, where $\oS_n (\psi, t) = \sum_{j=0}^{n-1} T^j \psi(t)$.
\end{lemma}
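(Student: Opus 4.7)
The plan is to decompose the denominator via the cocycle identity \eqref{E:Tproperty}, use the Fourier structure of $f_1$ to compute the weights $T^{jq}\charone(t)$ up to a bounded multiplicative error, and then run a Laplace-type concentration argument. Applying \eqref{E:Tproperty} with $\varphi = \charone$ yields
\[
\oS_{qr}(\charone, t) = \sum_{j=0}^{r-1} T^{jq}\charone(t) \cdot \oS_q(\charone, R_\alpha^{jq} t),
\]
so the desired inequality amounts to showing that the summands indexed by $j$ with $R_\alpha^{jq} t \notin B$ contribute less than a $\delta$-fraction of the total. Since $q\alpha = p + 1/r$, one has $R_\alpha^{jq} t = t + j/r$ on $\mathbb{T}$, so these indices correspond to the uniform grid $\{t + j/r\}_{j=0}^{r-1}$.

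Next I would compute $T^{jq}\charone(t) = \exp S_{jq} f_2(t)$ explicitly. The assumption $\widehat{f_1}(0) = 0$ and $\widehat{f_1}(k) = 0$ for $|k| \ge q$, combined with $\sum_{\ell=0}^{q-1} e^{2\pi i k \ell p/q} = 0$ for $k \in \{1, \ldots, q-1\}$, forces $S_q f_1 \equiv 0$ under $R_{\alpha_1}$. A first-order Taylor expansion in the perturbation $1/(qr)$ then gives $|S_q f_1| = O(1/r)$ under $R_\alpha$, hence $|S_{jq} f_1(t)| = O(1)$ uniformly for $j \le r$. For the sinusoidal part, the frequencies $\pm q$ are fixed by the $R_{\alpha_1}$-action, and a standard closed-form geometric-series identity gives
\[
-c\sum_{i=0}^{jq-1} \sin\!\bigl(2\pi q(t + i\alpha - t_0)\bigr) = \frac{cr}{2\pi}\bigl[\cos\!\bigl(2\pi q(t + j/r - t_0)\bigr) - \cos(2\pi q(t - t_0))\bigr] + O(1),
\]
uniformly in $t$ and $j \le r$. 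Consequently
\[
T^{jq}\charone(t) = C(t, r)\cdot \exp\!\Bigl(\tfrac{cr}{2\pi}\cos\!\bigl(2\pi q(t + j/r - t_0)\bigr)\Bigr)\cdot e^{O(1)},
\]
where $C(t, r) = \exp\bigl(-\tfrac{cr}{2\pi}\cos(2\pi q(t - t_0))\bigr)$ is independent of $j$ and cancels in the ratio of interest.

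Finally I would run the Laplace estimate. Let $F = \{t_0 + m/q : 0 \le m < q\}$, which is precisely the set on which $\cos(2\pi q(\cdot - t_0))$ attains its maximum value $1$. Since $B$ is an open neighborhood of $F$, there exists $\gamma > 0$ such that $\cos(2\pi q(s - t_0)) \le 1 - \gamma$ for every $s \in \mathbb{T} \setminus B$. Conversely, for every $t \in \mathbb{T}$ the grid $\{t + j/r\}$ contains a point within distance $1/(2r)$ of $F$, producing some $j^*$ with $\cos(2\pi q(t + j^*/r - t_0)) \ge 1 - O(1/r^2)$. Because $\oS_q(\charone, \cdot)$ is uniformly bounded above and below by positive constants depending only on $q$ and $\|f_2\|_\infty$, the contribution of the bad indices (those with $t + j/r \notin B$) is at most $O(r)\cdot e^{cr(1-\gamma)/(2\pi)}$ times $C(t, r)$, while the denominator is at least $e^{cr/(2\pi) - O(1)}$ times $C(t, r)$; their ratio is $O\!\bigl(r\,e^{-cr\gamma/(2\pi)}\bigr)$, which falls below $\delta$ once $r$ is large, uniformly in $t \in \mathbb{T}$. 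The main obstacle is ensuring that every $O(\cdot)$ constant is uniform in both $t$ and $j$, which follows from the smoothness of $f_1$ and the explicit trigonometric closed-form expressions above.
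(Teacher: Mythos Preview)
Your proof is correct and follows essentially the same route as the paper: both compute $T^{n}\charone(t)=\exp S_n f_2(t)$ by showing $S_n f_1=O(1)$ from the Fourier hypothesis and evaluating the sine sum in closed form to obtain $\exp\bigl(\text{const}\cdot r\,[\cos(2\pi q(R_\alpha^{n}t-t_0))-\cos(2\pi q(t-t_0))]+O(1)\bigr)$, then run a Laplace-type comparison between indices $j$ with $R_\alpha^{jq}t\in B$ versus those outside. Your write-up is in fact slightly more streamlined---you use the asymptotic $r/(2\pi)$ in place of the exact constant $\mathcal{Z}_r=\cos(\pi q\alpha)/(2\sin(\pi q\alpha))$, bound $\oS_q(\charone,\cdot)$ uniformly rather than folding it into the main estimate, and lower-bound the denominator by a single well-placed grid point $j^*$ rather than counting $\sim r\eta$ good indices---but these are cosmetic differences within the same argument.
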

\begin{proof}
It is sufficient to deal with $t_0=0$.
Let $\alpha = \alpha(r) = \alpha_1+ \frac{1}{qr}$ for some $r$ large, $r\in \mathbb{Z}$. Let us recall the formula
$$
\sin(2\pi q t)+ \sin(2\pi q \Ra t)+ \cdots + \sin(2\pi q \Ra^{n-1} t)
= \frac{\sin(2\pi q \frac{n+1}{2}\alpha ) \sin(2\pi q(\frac{n}{2}\alpha+t))}{\sin(\pi q\alpha)}.
$$
Applying the sine of sum formula to the first sine in the numerator yields
\begin{multline*}
\frac{\sin(2 \pi q \frac{n}{2} \alpha ) \cos(2 \pi q \frac{n}{2} \alpha ) \sin(2 \pi q (\frac{n}{2} \alpha+t) )}{\sin(\pi q \alpha)} + \cos\bigg(2 \pi q \frac{n}{2} \alpha \bigg)\sin\bigg(2 \pi q \bigg(\frac{n}{2} \alpha + t\bigg) \bigg)    \\
= \frac{\cos ( \pi q n \alpha )}{\sin(\pi q \alpha)}\sin\bigg(2 \pi q \frac{n}{2} \alpha \bigg)\sin\bigg(2 \pi q \bigg(\frac{n}{2} \alpha+t\bigg) \bigg) + O_{\alpha \to \alpha_1, n\to \infty}(1)\\
= \mathcal{Z}_r \bigg( \cos(2 \pi q t)- \cos (2\pi q \Ra^n t) \bigg) + O_{r\to\infty, n\to \infty}(1),
\end{multline*}
where $\mathcal{Z}_r =\frac{\cos(\pi q \alpha)}{2 \sin (\pi q \alpha)}$. Since $\hat{f}_1(j)=0$ for $j\ge q$ we have $S_n f_1(t)=O_{r\to \infty, n\to \infty}(1)$ uniformly in $t\in \mathbb{T}$. Those two facts combined give
$$S_nf_2(t) = c\mathcal{Z}_r \bigg( \cos(2\pi q \Ra^n t) - \cos(2 \pi q t) \bigg) + O_{r\to \infty, n\to \infty}(1),$$
and therefore
$$
T^n\charone (t) = \exp S_n f_2(t) = \exp \bigg( c\mathcal{Z}_r \bigg( \cos(2\pi q \Ra^n t) - \cos(2 \pi q t) \bigg) + O_{r\to \infty, n\to \infty}(1) \bigg),
$$
where $O_{r\to \infty, n\to \infty}(1) \bigg)$ uniformly in $t\in \mathbb{T}$. 

We have $T^{n+m}\varphi(t)=T^n\mathds{1}(t)T^m\varphi(R_\alpha^n t)$ for $n, m\ge 0$, which implies that for $j\le qr-1$
$$
T^{jq}\charone (t)  \oS_q(\charone, R^{jq}_\alpha t)
=\sum_{l=0}^{q-1} T^{jq+l} \charone(t) 
=\sum_{l=0}^{q-1} \exp \big( S_{jq+l}\charone(t) \big)
$$
$$
= \sum_{l=0}^{q-1} \exp\bigg( \bigg( c\mathcal{Z}_r \bigg( \cos(2\pi q \Ra^{jq+l} t) - \cos(2 \pi q t) \bigg) + O_{r\to \infty, n\to \infty}(1) \bigg).
$$

Let $\eta>0$ be so small that the $\eta$ neighbourhood of $F=\{0, \frac{1}{q}, \cdots, \frac{q-1}{q} \}$ is contained in $B$ and $\textrm{dist}( \Ra^j t, F)>\eta$ for $t\not\in B$, $j=0,1\cdots, q-1$. Let $\tilde{B}$ be an open set containing $F$, $\tilde{B}\subseteq B$, and such that $\textrm{dist}( \Ra^j t, F)<\eta/2$ for $t\in \tilde{B}$, $j=0, 1, \cdots, q-1$. $\textrm{dist}( \Ra^j t, F)>\eta$ for $t\not\in B$, $j=0,1\cdots, q-1$. \footnote{Observe that the definition of $\tilde B$ depends on $\alpha$ (and thus $r$) but in fact if $\tilde B$ is good for some $r_0$ fixed, then it is also good for all $r \ge r_0$. That is crucial as in the end of the proof we pass to the limit $r\to \infty$.}

If $j$ is such that $\mathds{1}_{\tilde B}(\Ra^{jq} t)=1$  then
$$
T^{jq}\charone(t)  \oS_q(\charone, R^{jq}_\alpha t) \ge q \exp\bigg( c\mathcal{Z}_r \cos(\pi q \eta ) \bigg) \exp\bigg( -c\mathcal{Z}_r \cos(2\pi q t) + O_{r\to\infty, n\to \infty, \eta\to 0}(1) \bigg).
$$
If $j$ is such that $\mathds{1}_{ B}(\Ra^{jq} t)=0$ then 
$$
T^{jq}\charone (t) \oS_q(\charone, R^{jq}_\alpha t) 
\le q \exp\bigg( c\mathcal{Z}_r \cos(2 \pi q \eta) \bigg) \exp\bigg( -c\mathcal{Z}_r \cos(2\pi q t) + O_{r\to\infty, n\to \infty, \eta\to 0}(1) \bigg).
$$ 
The points $\Ra^{qj} t$, $j=0,1, \cdots, r-1$ are equidistributed, therefore for at least $r \eta/4$ among all $j$'s, $j=0,1\cdots, r-1$, we have $\mathds{1}_{ \tilde B}(\Ra^{jq} t)=1$, thus
\begin{multline*}
\frac{\sum_{j=0}^{r-1} T^{jq}\charone(t) \mathds{1}_{\tilde{B}}( \Ra^{jq} t) \oS_q(\charone, R^{jq}_\alpha t)}{\sum_{j=0}^{r-1} T^{jq}\charone (t) (1-\mathds{1}_{B}(\Ra^{jq} t) \oS_q(\charone, R^{jq}_\alpha t) }\\
\ge \frac{q r \eta/4 \exp\bigg( c\mathcal{Z}_r \cos(\pi q \eta) \bigg) \exp\bigg( -c\mathcal{Z}_r \cos(2\pi q t) + O_{r\to\infty, n\to \infty, \eta\to 0}(1) \bigg)}{q r \exp\bigg( c\mathcal{Z}_r \cos(2\pi q \eta) \bigg) \exp\bigg( -c\mathcal{Z}_r \cos(2\pi q t) + O_{r\to\infty, n\to \infty, \eta\to 0}(1) \bigg)}\\
=\eta/4 \exp \bigg( c\mathcal{Z}_r\big(\cos(\pi q \eta) - \cos(2\pi q \eta)\big) + O_{n\to\infty, r\to\infty, \eta\to 0} (1) \bigg).
\end{multline*}
Recall that $\eta$ is fixed and $\mathcal{Z}_r\to\infty$ as $r\to \infty$. As a consequence, the above expression tends to infinity as $r\to \infty$. Since
$$
\frac{\sum_{j=0}^{r-1} T^{jq}\charone(t) \mathds{1}_{\tilde{B}}( \Ra^{jq} t) \oS_q(\charone, R^{jq}_\alpha t)}
{\sum_{j=0}^{r-1} T^{jq}\charone (t)  \oS_q(\charone, R^{jq}_\alpha t) } 
+
\frac{\sum_{j=0}^{r-1} T^{jq}\charone (t) (1-\mathds{1}_{B}(\Ra^{jq} t) \oS_q(\charone, R^{jq}_\alpha t) }
{\sum_{j=0}^{r-1} T^{jq}\charone (t)  \oS_q(\charone, R^{jq}_\alpha t) }
=1,
$$
which implies the assertion.
\end{proof}

\section{The proof of Theorem \ref{T:1}}
\subsection{Proof of (1)}

Let $\mathcal{H}$ be the space of all $C^\infty$ functions $h: \T \rightarrow \mathbb{R}$ with $h>0$ and $\int_\T \log h(z)dz=0$. Let $ {\mathcal{H}_0}\subseteq \mathcal{H}$ be the set of those $h$ for which the equation $\Rma^\ast \mu = h\mu$ possesses exactly one solution. Following the lines of the proof of Theorem 2 \cite{Chevallier} one can show that $\mathcal{H}_0$ is a dense $G_\delta$ in $\mathcal{H}$ with $C^\infty$-topology (the proof in \cite{Chevallier} is for $\|\cdot\|_\infty$-topology but it is easy to modify). Let $\Xi : \mathcal{H} \rightarrow \mathcal{G}$ be the map defined by $\Xi(h) = \frac{h}{1+h}$. This map is invertible, both $\Xi$ and $\Xi^{-1}$ are continuous, which proves $\Xi(\mathcal{H}_0)$ is a dense $G_\delta$ in $\mathcal{G}$.

We are going to prove that $\Xi(\mathcal{H}_0)\subseteq \mathcal{G}_0$. Let $\p\in \Xi(\mathcal{H}_0)$ and let us assume, contrary to the claim, that there exist two ergodic stationary measures $\mu_1$, $\mu_2$ of \eqref{E:2.1}. Both of them are necessarily solutions of $\Rma^\ast \mu=h_2\mu$, where $h_2 ={\p}/({\q\circ \Ra})\in \mathcal{H}_0$.  Indeed, by Lemma \ref{L:atomless} $\mu_1$, $\mu_2$ are atomless, thus Theorem 3.1 \cite{Conze_Guivarch_00} and the fact that $\p$ is symmetric imply that both $\mu_1$, $\mu_2$ are necessarily two different solutions of  $\Rma^\ast \mu=h_2\mu$. Now let $h\in \mathcal{H}_0$ satisfy $\p = \frac{h}{1+h}$. Since $\log h - \log h_2= \log \frac{\p}{\q} - \log \frac{\p}{\q \circ \Ra}  =\log \q\circ\Ra - \log \q$ is a coboundary, Lemma \ref{L:coboundary} and the fact that $h\in\mathcal{H}_0$ imply the equation $\Rma^\ast \mu=h_2\mu$ has the unique solution, which is a contradiction. Thus $\p\in \mathcal{G}_0$, which completes the proof of the unique ergodicity. Now stability follows from \cite{Czudek_Dolgopyat_24}.

\subsection{Proof of (2)}
Let us fix $\alpha_1, \alpha_2, \cdots, \alpha_d$, $d>2$, such that $\alpha_1, \cdots, \alpha_d, 1$ is rationally independent.  Fix $\p=\p(x_1, x_2)\in C(\mathbb{T}^2)$. If $\mu$ is a stationary distribution of \eqref{E:2.1} on two-dimensional torus with $\p=\p(x_1, x_2)\in C(\mathbb{T}^2)$ and $(\alpha_1, \alpha_2)$, then $\mu \otimes \textrm{Leb}_{d-2}$ is a stationary measure of \eqref{E:2.1} with $\tilde{\p}(x_1,\cdots, x_d):=\p(x_1, x_2)$ and $(\alpha_1, \alpha_2, \cdots, \alpha_d)$. Therefore to show the second part of Theorem 1 it is sufficient to deal with $d=2$.

It is sufficient to construct $\alpha=(\alpha_1, \alpha_2) \in \mathbb{T}^2$ (such that $\alpha_1, \alpha_2, 1$ are rationally independent), $h\in C^\omega(\mathbb{T}^2)$ of the form $h=\exp f$, $f\in C^\omega(\mathbb{T}^2)$, a continuous function $\varphi$ and infinite sequences of points $(z^+_n)$, $(z^-_n)\subseteq \mathbb{T}^2$ and numbers $q_1<q_2<\cdots$ with
\begin{equation}
\label{E:4.1goal}
\bigg|\frac{\oS_{q_n}(\varphi, z_n^+) }{\oS_{q_n}(\charone, z_n^+) } - \frac{\oS_{q_n}(\varphi, z_n^-) }{\oS_{q_n}(\charone, z_n^-) }\bigg|>0.5
\end{equation}
for every $n$. Indeed, then by Lemma \ref{L:uniform} the equation $\Rma^{\ast}\mu = h\mu$ has at least two different solutions. Let us then consider \eqref{E:2.1} with $\p=\frac{\exp f}{1+\exp f}$ and $\q=\frac{1}{1+\exp f}$. Put $h_2 ={\p}/({\q\circ \Ra})$. Then $\log h - \log h_2= \log \frac{\p}{\q} - \log \frac{\p}{\q \circ \Ra}  =\log \q\circ\Ra - \log \q$ is a coboundary, therefore $\Rma^\ast \mu=h_2\mu$ has at least two solutions by Lemma \ref{L:coboundary}. As we observed in Section 2 every solution of $\Rma^\ast \mu=h_2\mu$ is necessarily a stationary measure of \eqref{E:2.1}, which easily gives that \eqref{E:2.1} is not uniquely ergodic.

In the remaining of this part of the proof it is useful to denote
$$
T_{h, \vec{\alpha}} \varphi(z)= h\cdot (\varphi\circ R_{\valpha}), \quad S_n(h, \valpha, \varphi, z)=\sum_{j=0}^{n-1} T_{h, \valpha}^j\varphi(z).
$$
Let $\delta_1>\delta_2>\cdots$ be any sequence of positive reals with $\sum_{n=1}^\infty \delta_n<0.1$ and $\prod_{n=1}^\infty (1-\delta_n)>0.9$. We are going to define a function $\varphi\in C(\mathbb{T})$ and, by induction,
\begin{itemize}
\item a sequence of trigonometric polynomials $(f_n)$,
\item an increasing sequence of positive integers $(q_n)$ such that $q_n$ divides $q_{n+1}$, $n\ge 0$,
\item two sequences of circles $\ell_n^-$, $\ell_n^+\subseteq \mathbb{T}^2$  along with some open subsets $A_n^-, A_n^+$ containing them, where the circles $\ell_n^-$, $\ell_n^+$ are of the form $(a_nt, t)$, $t\in\mathbb{T}$, $(a_nt+\frac{1}{2}, t)$, $t\in \mathbb{T}$, where $a_{n+1} = a_1+q_1+q_2+\cdots+q_n$ for $n\ge 1$, $a_1=2$.
\end{itemize}
Those objects should be defined in such a way that the following conditions are satisfied. If we define
\begin{equation}
\label{E:4C1}
\valpha_n = \bigg( \sum_{j=1}^n \frac{a_j}{q_{j}}, \sum_{j=1}^{n} \frac{1}{q_{j}} \bigg),
\end{equation}
then
\begin{equation}
\label{E:4C2}
\frac{\oS_{q_n}(h_n, \valpha_n, \varphi, z_n^+) }{\oS_{q_n}(h_n, \valpha_n, \charone, z_n^+) }>(1-\delta_1)\cdots (1-\delta_n) \quad \textrm{and} \quad \frac{\oS_{q_n}(h_n, \valpha_n, \varphi, z_n^-) }{\oS_{q_n}(h_n, \valpha_n, \charone, z_n^-) }< \delta_1+\cdots+\delta_n
\end{equation}
for $z_n^-\in A_n^-$ and $z_n^+\in A_n^+$, $n\ge 1$, where $h_n=\exp f_n$ and $\varphi$ is a certain fixed continuous function.

Let $A_0^-=[-\frac{1}{10}, \frac{1}{10} ]\times \mathbb{T}$, $A_0^+=[\frac{4}{10}, \frac{6}{10}]\times\mathbb{T}$. Let $\varphi$ be any continuous function such that $\varphi(z)=0$ for $z\in A_0^-$, $\varphi(z)=1$ for $z\in A_0^+$ and $\|\varphi\|_\infty\le 1$. Further, let $\ell_1^-$, $\ell_1^+$ be the circles defined in parametric form as
\begin{equation}
\label{E:param}
\textrm{$\ell_1^-: (a_1t, t)$, $t\in [0,1]$, and $\ell_1^+: \bigg(a_1t+\frac{1}{2}, t\bigg)$, $t\in [0,1]$,}
\end{equation}
where $a_1=2$. Let $\vec{\alpha}_0=(0, 0 )$, $q_0=1$. Finally, let $f(x_1, x_2)=-\sin(4 \pi x_2)$.

Both $\ell_1^-$, $\ell_1^+$ are obviously $R_{\vec{\alpha}_0}$-invariant with the rotation number 0. The sets $A_0^- \cap \ell_1^-$ and $A_0^+ \cap \ell_1^+$ are open subsets of $\ell_1^-$ and $\ell_1^+$ containing $\{0, \frac{1}{2}\}$ (the coordinates are expressed in parametrization \eqref{E:param} in each of those circles). The function $f_1$ restricted to $\ell_1^-$ and $\ell_1^+$ equals $- \sin(4 \pi t)$. By Lemma \ref{L:auxiliary} there exists $r_1$ such that for $\vec{\alpha}_1 = ( \frac{a_1}{q_1}, \frac{1}{q_1} )$, $q_1=r_1q_0$, we have
\begin{multline*}
\frac{\oS_{q_1}(h_1, \valpha_1, \varphi, z)}{\oS_{q_1}(h_1, \valpha_1, \mathds{1} , z)}
\ge
\frac{\oS_{q_1}(h_1, \valpha_1, \mathds{1}_{A_0^+}, z )}{\oS_{q_1}(h_1, \valpha_1, \mathds{1} , z)}\\
=
\frac{\sum_{j=0}^{r_1-1} T_{h_1, \valpha_1}^{jq_0}\charone(z)\cdot \charone_{A_0^+}( R_{\vec{\alpha}_1}^{jq_0} z )\cdot \oS_{q_0}(h_1, \valpha_1, \mathds{1}, R_{\vec{\alpha}_1}^{jq_0} z) }
{\oS_{q_1}(h_1, \valpha_1, \mathds{1} , z)}
>1-\delta_1 \quad \textrm{for $z\in \ell_1^+$}
\end{multline*}
thus
\begin{equation}
\label{E:4.1}
\frac{\oS_{q_1}(h_1, \valpha_1, \varphi, z)}{\oS_{q_1}(h_1, \valpha_1, \mathds{1} , z)} >1-\delta_1  \quad \textrm{for $z\in A_1^+$,}
\end{equation}
where $A_1^+$ is a sufficiently small strip around $\ell_1^+$. Applying Lemma \ref{L:auxiliary} to $\ell_1^-$ and possibly increasing $r_1$ we obtain
\begin{multline*}
\frac{\oS_{q_1}(h_1, \valpha_1, \varphi, z)}{\oS_{q_1}(h_1, \valpha_1, \mathds{1} , z)}
\le
\frac{\oS_{q_1}(h_1, \valpha_1, \mathds{1}-\mathds{1}_{A_0^-}, z )}{\oS_{q_1}(h_1, \valpha_1, \mathds{1} , z)}\\
=
1-\frac{\sum_{j=0}^{r_1-1} T_{h_1, \valpha_1}^{jq_0}\charone(z)\cdot \charone_{A_0^-}( R_{\vec{\alpha}_1}^{jq_0} z )\cdot \oS_{q_0}(h_1, \valpha_1, \mathds{1}, R_{\vec{\alpha}_1}^{jq_0} z) }{\oS_{q_1}(h_1, \valpha_1, \mathds{1} , z)}<\delta_1 \quad \textrm{for $z\in \ell_1^-$,}
\end{multline*}
therefore
\begin{equation}
\label{E:4.2}
\frac{\oS_{q_1}(h_1, \valpha_1, \varphi, z)}{\oS_{q_1}(h_1, \valpha_1, \mathds{1} , z)}< \delta_1 \quad \textrm{for $z\in A_1^-$,}
\end{equation}
where $A_1^-$ is a sufficiently small strip around $\ell_1^-$. The equations \eqref{E:4.1}, \eqref{E:4.2} prove \eqref{E:4C2} for $n=1$.

Let us assume that the required objects are defined for some $n\ge 1$. Let $\ell_{n+1}^-$, $\ell_{n+1}^+$ be the circles defined in parametric form as
\begin{equation}
\label{E:param2}
\textrm{$\ell_{n+1}^-: (a_{n+1}t, t)$, $t\in \mathbb{T}$, and  $\ell_{n+1}^+: \bigg(a_{n+1}t + \frac{1}{2}, t\bigg)$, $t\in \mathbb{T}$,}
\end{equation}
where $a_{n+1}=a_1+q_1+\cdots+q_n$. Both of those circles are $R_{\valpha_n}$-invariant or, equivalently, $\vec{\alpha}_n\in \ell_{n+1}^-$. Indeed, if the following equalities are read mod 1 then (as $q_{j+1}$ is a multiple of $q_j$ for each $j=0,1, \cdots n-1$)
$$
a_{n+1} \sum_{j=1}^n \frac{1}{q_j} 
= \bigg(a_1+\sum_{j=1}^n q_j \bigg) \sum_{j=1}^n \frac{1}{q_j}
= \frac{a_1}{q_1} + \sum_{j=2}^n \frac{a_1+q_1+\cdots+q_{j-1}}{q_j}
=  \sum_{j=1}^n \frac{a_j}{q_j}.
$$
Let $f_{n+1}(x_1, x_2)= f_{n}(x_1, x_2) - c_{n+1}\sin(2\pi q_n x_2)$, where the number $c_{n+1}$ is so small that \eqref{E:4C2} still holds if $h_n=\exp f_n$ is replaced by $h_{n+1}=\exp f_{n+1}$. We assume also that
\begin{equation}
\label{E:4.decay2}
c_{n+1} \in (0, \exp(-q_n)).
\end{equation}

The points of intersection $\ell_{n+1}^+\cap \ell_n^+$ (in parametrization \eqref{E:param2} of $\ell_{n+1}^+$) are the points $j/q_n$, $j=0,1,\cdots, q_n-1$, and $A_{n}^+\cap \ell_{n+1}^+$ is an open subset of $\ell_{n+1}^+$ containing those points. The function $f_{n+1}$ restricted to $\ell_{n+1}^+$ in parametrization \eqref{E:param2} is of the form $\widetilde{f}(t)-c_{n+1} \sin(2\pi q_n t)$, where $\widetilde{f}(t)$ is a trigonometric polynomial with $\widehat{\widetilde{f}}(j)=0$ for $j\ge q_{n}$. The application of Lemma \ref{L:auxiliary} gives $r_{n+1}$ such that if $\valpha_{n+1}=\valpha_n+( \frac{a_{n+1}}{q_{n+1}} , \frac{1}{q_{n+1}} )$, where $q_{n+1}=q_n r_{n+1}$, then
$$
\frac{\sum_{j=0}^{r_{n+1}-1} T_{h_{n+1}, \valpha_{n+1}}^{jq_n}\charone(z)\cdot \charone_{A_n^+}( R_{\vec{\alpha}_{n+1}}^{jq_n} z )\cdot \oS_{q_{n}}(h_{n+1}, \valpha_{n+1}, \mathds{1}, R_{\vec{\alpha}_{n+1}}^{jq_n} z) }{\oS_{q_{n}}(h_1, \valpha_{n+1}, \mathds{1} , z)}>1-\delta_{n+1} \quad \textrm{for $z\in \ell_{n+1}^+$.}
$$
Using induction assumption \eqref{E:4C2} (together with what we assumed about $c_{n+1}$) and then the above consequence of Lemma \ref{L:auxiliary} we can write
\begin{multline*}
\frac{\oS_{q_{n+1}}(h_{n+1}, \valpha_{n+1}, \varphi, z)}{\oS_{q_{n+1}}(h_{n+1}, \valpha_{n+1}, \mathds{1}, z)} 
=
\frac{\sum_{j=0}^{r_{n+1}-1} T_{h_{n+1}, \valpha_{n+1}}^{jq_n}\charone(z)\cdot  \oS_{q_{n}}(h_{n+1}, \valpha_{n+1}, \varphi, R_{\vec{\alpha}_{n+1}}^{jq_n} z) }
{\oS_{q_{n+1}}(h_{n+1}, \valpha_{n+1}, \mathds{1}, z)} \\
\ge 
\frac{\sum_{j=0}^{r_{n+1}-1} T_{h_{n+1}, \valpha_{n+1}}^{jq_n}\charone(z)\cdot \charone_{A_n^+}( R_{\vec{\alpha}_{n+1}}^{jq_n} z )\cdot \oS_{q_{n}}(h_{n+1}, \valpha_{n+1}, \varphi, R_{\vec{\alpha}_{n+1}}^{jq_n} z) }
{\oS_{q_{n+1}}(h_{n+1}, \valpha_{n+1}, \mathds{1}, z)}\\
\ge
\frac{\sum_{j=0}^{r_{n+1}-1} T_{h_{n+1}, \valpha_{n+1}}^{jq_n}\charone(z)\cdot \charone_{A_n^+}( R_{\vec{\alpha}_{n+1}}^{jq_n} z )\cdot \oS_{q_{n}}(h_{n+1}, \valpha_{n+1}, \mathds{1}, R_{\vec{\alpha}_{n+1}}^{jq_n} z) }
{\oS_{q_{n+1}}(h_{n+1}, \valpha_{n+1}, \mathds{1}, z)} (1-\delta_1)\cdots (1-\delta_n)\\
>(1-\delta_1)\cdots (1-\delta_{n+1}) \quad \textrm{for $z\in \ell_{n+1}^+$.}
\end{multline*}

The points of intersection $\ell_{n+1}^-\cap \ell_n^-$ (in parametrization \eqref{E:param2} of $\ell_{n+1}^-$) are again the points $j/q_n$, $j=0,1,\cdots, q_n-1$, and $A_{n}^-\cap \ell_{n+1}^-$ is an open subsets of $\ell_{n+1}^-$ containing those points. The function $f_{n+1}$ restricted to $\ell_{n+1}^-$ is of the form $\widetilde{f}(t)-c_{n+1} \sin(2\pi q_n t)$ in parametrization \eqref{E:param2}, where $\widetilde{f}(t)$ is a trigonometric polynomial with $\widehat{\widetilde{f}}(j)=0$ for $j\ge q_{n}$. Lemma \ref{L:auxiliary} says that we can correct the previous choice of $r_{n+1}$ in such a way that
$$
\frac{\sum_{j=0}^{r_{n+1}-1} T_{h_{n+1}, \valpha_{n+1}}^{jq_n}\charone(z)\cdot \charone_{A_n^-}( R_{\vec{\alpha}_{n+1}}^{jq_n} z )\cdot \oS_{q_{n}}(h_{n+1}, \valpha_{n+1}, \mathds{1}, R_{\vec{\alpha}_{n+1}}^{jq_n} z) }{\oS_{q_{n}}(h_1, \valpha_{n+1}, \mathds{1} , z)}>1-\delta_{n+1} \quad \textrm{for $z\in \ell_{n+1}^-$,}
$$
which easily gives
$$
\frac{\sum_{j=0}^{r_{n+1}-1} T_{h_{n+1}, \valpha_{n+1}}^{jq_n}\charone(z)\cdot (1-\charone_{A_n^-}( R_{\vec{\alpha}_{n+1}}^{jq_n} z ))\cdot \oS_{q_{n}}(h_{n+1}, \valpha_{n+1}, \mathds{1}, R_{\vec{\alpha}_{n+1}}^{jq_n} z) }{\oS_{q_{n}}(h_1, \valpha_{n+1}, \mathds{1} , z)}<\delta_{n+1} \quad \textrm{for $z\in \ell_{n+1}^-$.}
$$
Using the above fact and the induction assumption \eqref{E:4C2} we have
\begin{multline*}
\frac{\oS_{q_{n+1}}(h_{n+1}, \valpha_{n+1}, \varphi, z)}{\oS_{q_{n+1}}(h_{n+1}, \valpha_{n+1}, \mathds{1}, z)}\\
=
\frac{\sum_{j=0}^{r_{n+1}-1} T_{h_{n+1}, \valpha_{n+1}}^{jq_n}\charone(z)\cdot (1-\charone_{A_n^-}( R_{\vec{\alpha}_{n+1}}^{jq_n} z ))\cdot \oS_{q_{n}}(h_{n+1}, \valpha_{n+1}, \varphi, R_{\vec{\alpha}_{n+1}}^{jq_n} z) }{\oS_{q_{n}}(h_1, \valpha_{n+1}, \mathds{1} , z)}\\
+
\frac{\sum_{j=0}^{r_{n+1}-1} T_{h_{n+1}, \valpha_{n+1}}^{jq_n}\charone(z)\cdot \charone_{A_n^-}( R_{\vec{\alpha}_{n+1}}^{jq_n} z )\cdot \oS_{q_{n}}(h_{n+1}, \valpha_{n+1}, \varphi, R_{\vec{\alpha}_{n+1}}^{jq_n} z) }{\oS_{q_{n}}(h_1, \valpha_{n+1}, \mathds{1} , z)}\\
<
\delta_{n+1} + 
\frac{\sum_{j=0}^{r_{n+1}-1} T_{h_{n+1}, \valpha_{n+1}}^{jq_n}\charone(z)\cdot \charone_{A_n^-}( R_{\vec{\alpha}_{n+1}}^{jq_n} z )\cdot \oS_{q_{n}}(h_{n+1}, \valpha_{n+1}, \mathds{1}, R_{\vec{\alpha}_{n+1}}^{jq_n} z) }{\oS_{q_{n}}(h_1, \valpha_{n+1}, \mathds{1} , z)}(\delta_1+\cdots+\delta_n)\\
\le \delta_1+\cdots+\delta_{n+1} \quad \textrm{for $z\in \ell_{n+1}^-$.}
\end{multline*}
Let $A_{n+1}^+$ and $A_{n+1}^-$ be small strips around $\ell_{n+1}^+$ and $\ell_{n+1}^-$. Then 
$$
\frac{\oS_{q_{n+1}}(h_{n+1}, \valpha_{n+1}, \varphi, z_+)}{\oS_{q_{n+1}}(h_{n+1}, \valpha_{n+1}, \mathds{1}, z_+)}>(1-\delta_1)\cdots (1-\delta_{n+1}), \quad 
\frac{\oS_{q_{n+1}}(h_{n+1}, \valpha_{n+1}, \varphi, z_-)}{\oS_{q_{n+1}}(h_{n+1}, \valpha_{n+1}, \mathds{1}, z_-)}<\delta_1+\cdots+\delta_{n+1}
$$
for $z^+\in A_{n+1}^+$ and $z^-\in A_{n+1}^-$.

Recall that our assumption on $c_{n+1}$ says that the inequalities \eqref{E:4C2} hold with $h_n$ replaced by $h_{n+1}$. Since $r_{n+1}$ can be taken arbitrarily large, we can also assume that $\valpha_{n+1}$ is sufficiently close to $\valpha_n$ that the inequalities \eqref{E:4C2} obtained in the previous steps still hold with $h_{n}$ replaced by $h_{n+1}$ and $\valpha_{n}$ replaced by $\valpha_{n+1}$.

This completes the induction procedure. Let us take $\valpha=\lim_{n\to \infty} \valpha_n$ and $f=\sum_{n=1}^\infty f_n$. By \eqref{E:4.decay2} $f$ is analytic. By what have been said in the last paragraph of the induction procedure the inequalities \eqref{E:4C2} still hold with $\valpha_n$ replaced by $\valpha$ and $f_n$ replaced by $f$. This shows \eqref{E:4.1goal} by the choice of the sequence $(\delta_n)$.

\subsection{Proof of (3)}

Fix $d\ge 1$, $\p \in C^s(\T)$ symmetric, where $s=s(d)$ is given by Corollary to Theorem 61 \cite{Moshchevitin}. Denote $f(x)=\log\frac{\p(x)}{\q(x)}$, $h(x)=\exp f(x)$, and recall that we assume $\int_{\T} f(x)dx=0$. Let $\mu$ be any ergodic stationary measure of \eqref{E:2.1}. Then, as explained in Section 2, $\mu$ is a solution of the quasi-invariance equation. Let $T$ be the corresponding operator on $L^1(\mu)$, $T\varphi = h\cdot ( \varphi\circ \Ra)$, and observe that $T^2: L^1(\mu) \rightarrow L^1(\mu)$ is conservative ergodic (see \cite{Petersen} just before Lemma 7.2), thus the Chacon-Ornstein theorem (Theorem 8.4 \cite{Petersen}) applied to $T^2$ implies that for any fixed $\varphi \in C(\T)$
$$
\frac{\varphi(z)+T^2\varphi(z)+\cdots+T^{2n}\varphi(z)}{\mathds{1}(z)+T^2\mathds{1}(z)+\cdots+T^{2n}\mathds{1}(z)}\to \int_{\T} \varphi d\mu 
$$
for $\mu$ a.e $z\in \T$. Without loss of generality we can assume that $\int_{\T} \varphi d\mu=0$, therefore there exists $r$ such that
\begin{equation}
\label{E:Orey}
\mu\bigg( \bigg\{ z \in \T: \bigg| \frac{\varphi(z)+T^2\varphi(z)+\cdots+T^{2r}\varphi(z)}{\mathds{1}(z)+T^2\mathds{1}(z)+\cdots+T^{2r}\mathds{1}(z)} \bigg|>\varepsilon/2 \bigg\} \bigg) < \varepsilon/2
\end{equation}
Let $(\xi_n)$ be the random walk in random environment with $\Prob_z$, $\Prob_\mu$ defined in Section 3. Given $n_0$, let $G$ be the set of all $z$ such that
\begin{equation}
\label{E:defG}
\forall_{n\ge n_0} \bigg| \sum_{j=0}^r \frac{\mathbb{P}_z(\xi_{2n} =2j)}{\mathbb{P}_z(\xi_{2n}\in [0, 2r])}\varphi \big( R^{2j}_\alpha (z) \big)  \bigg| < \varepsilon
\end{equation}
By \eqref{E:Orey}, Lemma \ref{L:Orey} and the following Remark if $n_0$ is sufficiently large then $\mu(G)>1-\varepsilon$.
Define 
$$
\mathcal{I}_k := [2kr, 2(k+1)r), \quad k\in \mathbb{Z}
$$
and $\tau_k = \min \{ n \ge 1: \textrm{ $\Ra^{\xi_{2n}}z \in G$ and $ \xi_{2n} \in \mcal$ } \}$.
\begin{lemma}
\label{L:probability}
If $n$ is sufficiently large, then
$$
\sum_{k\in \mathbb{Z}} \sum_{l\in \mathcal{I}_k} \int_{\T} \Prob_z(\xi_{2n}=l , \tau_k > 2n-2n_0)\mu(dz) < 2\varepsilon.
$$
\end{lemma}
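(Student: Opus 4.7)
Let $m=n-n_0$. The plan is to decompose $\{\xi_{2n}=l,\tau_k>2n-2n_0\}$ according to the walk's state at the intermediate time $2m$, reducing it to one piece controlled by $P$-stationarity of $\mu$ and another controlled by the spreading of the walk on $\mathbb{Z}$. The key observation is that, since $\tau_k>2m$ means no $\tilde m\le m$ satisfies both $\xi_{2\tilde m}\in\mathcal{I}_k$ and $\Ra^{\xi_{2\tilde m}}z\in G$, in particular at $\tilde m=m$ either $\xi_{2m}\notin\mathcal{I}_k$, or $\xi_{2m}\in\mathcal{I}_k$ but $\Ra^{\xi_{2m}}z\notin G$.

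For the ``bad environment'' piece, summing over $l\in\mathcal{I}_k$ collapses to $\{\xi_{2n}\in\mathcal{I}_k\}$, and summing over $k$ uses that the $\mathcal{I}_k$'s partition $\mathbb{Z}$:
\[
\sum_k\sum_{l\in\mathcal{I}_k}\Prob_z(\xi_{2n}=l,\xi_{2m}\in\mathcal{I}_k,\Ra^{\xi_{2m}}z\notin G)\le\Prob_z(\Ra^{\xi_{2m}}z\notin G).
\]
Integrating against $\mu$ and using $P$-stationarity via \eqref{E:dual_xi} gives $\int\Prob_z(\Ra^{\xi_{2m}}z\notin G)\,\mu(dz)=\mu(G^c)<\varepsilon$.

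For the ``wrong interval'' piece, the same collapsing yields
\[
\sum_k\sum_{l\in\mathcal{I}_k}\Prob_z(\xi_{2n}=l,\xi_{2m}\notin\mathcal{I}_k)=\Prob_z\!\bigl(k(\xi_{2n})\ne k(\xi_{2m})\bigr),
\]
where $k(j)$ denotes the unique index with $j\in\mathcal{I}_{k(j)}$. Since $|\xi_{2n}-\xi_{2m}|\le 2n_0$, this event forces $\xi_{2m}$ to lie within $2n_0$ of some boundary point $2kr$. Conditioning at time $2m$ via the strong Markov property and using shift-equivariance of the quasi-periodic environment, this probability rewrites as
\[
\sum_j\Prob_z(\xi_{2m}=j)\,\Prob_{\Ra^{j}z}\!\bigl(j+\xi_{2n_0}\notin\mathcal{I}_{k(j)}\bigr),
\]
whose inner factor vanishes unless $j$ lies in the $2n_0$-neighborhood of a boundary and is bounded uniformly otherwise.

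The main obstacle is showing that the $\mu$-average of this ``boundary-crossing'' probability is below $\varepsilon$ for $n$ large. The intended route is to combine Lemma~\ref{L:Orey} and the subsequent remark (identifying $\rho_j(z)=T^jh(z)$ as the invariant density of the walk on $\mathbb{Z}$) with the $P$-stationarity of $\mu$: the strong ratio limit property tells us that the distribution of $\xi_{2m}$ is asymptotically proportional to $\rho_j(z)$, and averaging $\rho$-weighted quantities against $\mu$ becomes tractable via the quasi-invariance equation. One then verifies that the $\mu$-averaged density of $\xi_{2m}\pmod{2r}$ is asymptotically uniform across residues, so the mass in the $2n_0$-wide boundary zone is $O(n_0/r)$ as $n\to\infty$; enlarging $r$ in the preceding Chacon--Ornstein step (which is permitted, since the convergence there persists for all sufficiently large $r$) makes this piece $<\varepsilon$. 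Combining the two pieces yields the bound $<2\varepsilon$.
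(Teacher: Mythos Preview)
Your decomposition at time $2m=2(n-n_0)$ and the bound on the ``bad environment'' piece via stationarity are correct. The genuine gap is in the ``wrong interval'' piece. The assertion that ``the $\mu$-averaged density of $\xi_{2m}\pmod{2r}$ is asymptotically uniform across residues'' is not justified by the tools you cite, and I do not see how it could be. Lemma~\ref{L:Orey} concerns ratios at \emph{fixed} sites and gives no uniform control over the infinite sum $\sum_k\Prob_z(\xi_{2m}\in[2kr-2n_0,2kr+2n_0])$. Stationarity of $\mu$ and the quasi-invariance equation only produce identities of the form $\int\sum_j\Prob_z(\xi_{2m}=j)\,\psi(R_\alpha^jz)\,\mu(dz)=\int\psi\,d\mu$: they control the law of the \emph{environment seen from the particle}, $R_\alpha^{\xi_{2m}}z$, not the law of the position $\xi_{2m}$ itself, and the boundary indicator is a function of $j$, not of $R_\alpha^jz$. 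Moreover, your suggestion to enlarge $r$ afterwards is circular: $n_0$ is chosen \emph{after} $r$ (since $G$, and hence the threshold $n_0$ needed for $\mu(G)>1-\varepsilon$, depends on $r$), so nothing prevents $n_0/r$ from being large.

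The paper bypasses this issue by a different mechanism. Instead of examining only the single time $2m$, it proves, via a recurrence argument (reducing to $\Prob_\mu(\max_{j<n}\xi_j<\xi_n)\to 0$, which follows from the ergodic theorem on the two-sided shift together with $\int\rho\,d\Prob_\mu^\infty=0$), that with $\Prob_\mu$-probability tending to $1$ the path $\{\xi_0,\dots,\xi_{2m}\}$ has visited \emph{every} site of the interval $\mathcal{I}_\kappa$ containing $\xi_{2n}$. On that event, $\tau_k>2m$ forces $R_\alpha^lz\notin G$ for \emph{all} $l\in\mathcal{I}_k$, in particular for $l=\xi_{2n}$, and then stationarity gives $\Prob_\mu(R_\alpha^{\xi_{2n}}z\notin G)=\mu(\T\setminus G)<\varepsilon$ exactly as in your first piece. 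The crucial point is that the paper's complementary event (the walk has not yet swept $\mathcal{I}_\kappa$) has probability tending to $0$ for \emph{fixed} $r$ and $n_0$, so no delicate balancing of parameters is required.
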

We postpone the proof until the next subsection. Now we have by \eqref{E:dual_xi}
\begin{multline*}
U^{2n} \varphi(z)
=\sum_{k\in \Z} \sum_{l\in \mathcal{I}_k} \Prob_z(\xi_{2n} = l) \varphi\big( \Ra^l z \big)\\
= \sum_{k\in \mathbb{Z}} \sum_{l\in \mathcal{I}_k} \sum_{m=1}^{n-n_0} \sum_{l'\in \mathcal{I}_k} \Prob_z \big( \xi_{2n}=l, \tau_k = 2m, \xi_{2m}=l' \big)\varphi\big( \Ra^l z \big)\\
+ 
\sum_{k\in \mathbb{Z}} \sum_{l\in \mathcal{I}_k} \Prob_z \big( \xi_{2n}=l, \tau_k \ge 2n-2n_0\big) \varphi\big( \Ra^l z \big)\\
=\sum_{k\in \mathbb{Z}} \sum_{m=1}^{n-n_0}\sum_{l'\in \mcal} \Prob_z \big( \xi_{2n}\in \mcal, \tau_k = 2m, \xi_{2m}=l' \big)
\cdot
 \sum_{l\in \mcal} \frac{\Prob_z(\xi_{2n}=l | \tau_k=2m, \xi_{2m}=l')}{\Prob_z(\xi_{2n}\in \mcal | \tau_k=2m, \xi_{2m}=l')} \varphi(R^l_\alpha z)\\
 +\sum_{k\in \mathbb{Z}} \sum_{l\in \mathcal{I}_k} \Prob_z(\xi_{2n}=l , \tau_k \ge 2n-2n_0) \varphi(\Ra^l z).
\end{multline*}
By the definition of $\tau_k$ and $G$ we have
\begin{equation}
\label{E:4.2.1}
\bigg|\sum_{l\in \mcal} \frac{\Prob_z(\xi_{2n}=l | \xi_{2m}=l', \Ra^{l'} z \in G)}{\Prob_z(\xi_{2n}\in \mcal | \xi_{2m}=l', \Ra^{l'} z \in G)} \varphi(R^l_\alpha z) \bigg| < \varepsilon
\end{equation}
Therefore using Lemma \ref{L:probability} and \eqref{E:4.2.1}
\begin{multline*}
\bigg|\int_{\T} U^{2n} \varphi(z) \psi(z) \mu(dz) \bigg|
\le 
\| \psi \|_\infty \int_{\T} \big| U^{2n} \varphi(z) \big| \mu(dz)\\
\le 
\varepsilon \| \psi \|_\infty  
+
\|\psi\|_\infty \|\varphi\|_\infty \sum_{k\in \mathbb{Z}} \sum_{l\in \mathcal{I}_k} \int_{\T} \Prob_z(\xi_{2n}=l , \tau_k > 2n-2n_0) \mu(dz)\\
\le
 \varepsilon\|\psi\|_\infty + 2\varepsilon \| \psi \|_\infty \|\varphi\|_\infty
\end{multline*}
for $n$ sufficiently large. Since $\varepsilon$ can be arbitrarily small, this proves the assertion for $U^{2}$. To complete the proof it is sufficient to observe that $U^{2n+1}\varphi= U^{2n}(U\varphi)$ and $U\varphi\in C(\T)$.

\subsection{The proof of Lemma \ref{L:probability}}

Let us recall that $\Prob_\mu$ was defined in \eqref{E:annealed}. Given $n$ let $\kappa(\omega)\in \Z$ be the index with $\xi_{2n}(\omega)\in \mathcal{I}_{\kappa(\omega)}$. First we claim that for every $r$ and $n_0$
\begin{equation}
\label{E:4.3.1}
\lim_{n\to \infty} \Prob_\mu \bigg( \bigg\{ \textrm{there exists $l\in \mathcal{I}_{\kappa(\omega)}$ such that $l\not\in \{\xi_0,\cdots, \xi_{2n-2n_0}\}$}\bigg\} \bigg) = 0
\end{equation}
If, contrary to the claim, the limit inferior of the above expression is positive, then
\begin{equation}
\label{E:4.3.2}
\liminf_{n\to\infty} \Prob_\mu (\max_{j\le n-1} \xi_j<\xi_n) >0.
\end{equation}
or
\begin{equation}
\label{E:4.3.3}
\liminf_{n\to\infty} \Prob_\mu (\min_{j\le n-1} \xi_j>\xi_n) >0.
\end{equation}
Indeed, for each $n$
\begin{multline*}
\Prob_\mu \bigg( \bigg\{ \textrm{there exists $l\in \mathcal{I}_{\kappa(\omega)}$ such that $l\not\in \{\xi_0,\cdots, \xi_{2n-2n_0}\}$}\bigg\} \bigg)\\
=
\Prob_\mu \bigg(  \bigg\{ \textrm{there exists $l\in \mathcal{I}_{\kappa(\omega)}$ such that $l\not\in \{\xi_0,\cdots, \xi_{2n-2n_0}\}$}\bigg\}
\cap \{\xi_{2n}\ge 0 \} \bigg)\\
+
\Prob_\mu \bigg(\bigg\{ \textrm{there exists $l\in \mathcal{I}_{\kappa(\omega)}$ such that $l\not\in \{\xi_0,\cdots, \xi_{2n-2n_0}\}$}\bigg\}
\cap \{\xi_{2n}< 0 \} \bigg),
\end{multline*}
therefore the limit inferior of at least one of the summands on the right-hand side is positive. Let us assume the first one is positive. The goal is to show that \eqref{E:4.3.2} holds in that case (it is analougous to show that \eqref{E:4.3.3} holds if the limit inferior of the second summand is positive). By the definition of $\mathcal{I}_k$ and $\kappa$
\begin{multline*}
\bigg\{ \textrm{there exists $l\in \mathcal{I}_{\kappa(\omega)}$ such that $l\not\in \{\xi_0,\cdots, \xi_{2n-2n_0}\}$}\bigg\} \cap \{ \xi_{2n}>0 \}\\
\subseteq
\bigg\{ 2\kappa(\omega)r\le \xi_{2n} \bigg\}
\cap
\bigg\{ \max ( \xi_0,\cdots, \xi_{2n-2n_0} ) <2(\kappa(\omega)+1)r-1 \bigg\}
\cap \{ \xi_{2n}>0 \}\\
\subseteq
\{ \max(\xi_0, \cdots, \xi_{2n-2n_0}) - \xi_{2n-2n_0} < 2r+n_0 \},
\end{multline*}
thus $\liminf \Prob_\mu \big( \max(\xi_0, \cdots, \xi_{2n-2n_0}) - \xi_{2n-2n_0} < 2r+n_0 \big)>0$. On the other hand,
\begin{multline*}
\{ \max(\xi_0, \cdots, \xi_{2n-2n_0}) - \xi_{2n-2n_0} <2r + n_0 \} \cap \bigcap_{j=0}^{2r-n_0} \{ \xi_{2n-2n_0+j} - \xi_{2n-2n_0+j-1}=1\}\\
\subseteq
\bigg\{ \max_{j<2n+2r-n_0} \xi_j < \xi_{2n+2r-n_0} \bigg\}.
\end{multline*}
Since $\p>0$ we have
$$
\Prob_\mu \bigg( \bigcap_{j=0}^{2r-n_0} \{ \xi_{2n-2n_0+j} - \xi_{2n-2n_0+j-1}=1\} \bigg)= \int_\T \prod_{j=0}^{2r-n_0} \p(R^{\xi_{2n-2n_0+j-1}}_\alpha z ) \mu(dz)=:\eta > 0.
$$
The Markov property and the above give
$$
\Prob_\mu \bigg( \max(\xi_0, \cdots, \xi_{2n-2n_0}) - \xi_{2n-2n_0} <2r + n_0 \bigg)\le\frac{1}{\eta}\Prob_\mu \bigg( \max_{j<2n+2r-n_0} \xi_j < \xi_{2n+2r-n_0} \bigg),
$$
which yields
\begin{multline*}
0<\liminf_{n\to \infty} \Prob_\mu\bigg( \bigg\{ \textrm{there exists $l\in \mathcal{I}_{\kappa(\omega)}$ such that $l\not\in \{\xi_0,\cdots, \xi_{2n-2n_0}\}$}\bigg\} \cap \{ \xi_{2n}>0 \} \bigg)\\
\le
   \liminf_{n\to \infty} \frac{1}{\eta}\Prob_\mu\bigg( \max_{j<n} \xi_j < \xi_n \bigg),
\end{multline*}
which proves \eqref{E:4.3.2}. We know therefore  that \eqref{E:4.3.2} or \eqref{E:4.3.3} holds and, without loss of generality, we assume the first of those is valid (the proof in the second case is analogous).

Let $\mathcal{X}=\mathbb{T}^\mathbb{Z}$, $\mathcal{F}^\infty$ be the product $\sigma$-algebra on $\mathcal{X}$, $\Prob_\mu^\infty$ be the Markov measures on $(\mathcal{X}, \mathcal{F}^\infty)$ defined on the cylinder $A_0\times\cdots \times A_n$, $A_i\in\mathcal{B}(\mathbb{T})$ by
$$
\Prob_\mu^\infty (A_0\times\cdots \times A_n)= \int_{A_0} \int_{A_1} \cdots \int_{A_{n-1}} p(x_{n-1}, A_n) p(x_{n-2}, dx_{n-1})\cdots p(x_1, dx_2)p(x_0, dx_1) \mu(dx_0),
$$
where $p(\cdot, \cdot)$ is defined by \eqref{E:2.1}. Let $\sigma$ denote the left shift on $\mathcal{X}$. Clearly $(\mathcal{X}, \mathcal{F}^\infty, \Prob_\mu^\infty, \sigma)$ is an ergodic invertible measure preserving dynamical system (recall we assumed $\mu$ to be ergodic). Let
$$\rho(\overline{x})=\left\{
\begin{array}{ccc}
1&\mbox{}&\textrm{if $x_1=x_0+\alpha$}\\
-1&\mbox{}&\textrm{if $x_1=x_0-\alpha$}\\
0&\mbox{}&\textrm{otherwise,}
\end{array}
\right.$$
where $\overline{x}=(\cdots, x_{-1}, x_0, x_1, \cdots )\in \mathcal{X}$. Since $\mu$ is stationary, it satisfies $R_{-\alpha}^\ast \mu = h\mu$ for $h=\frac{\p}{\q \circ \Ra } \mu $, thus we have
\begin{multline*}
 \int_\mathcal{X} \rho(\overline{x}) \Prob_\mu^\infty(d\overline{x})
 = \int_\T (\p(z)-\q(z))\mu(dz)
 = \int_\T \q\circ \Ra (z) \frac{\p(z)}{\q\circ \Ra(z)} \mu(dz)-\int_\T \q(z)\mu(dz)\\
 = \int_\T \q\circ \Ra (z) R^\ast_{-\alpha} \mu(dz) -\int_\T \q(z)\mu(dz)
 = \int_\T \q(z) \mu(dz) - \int_\T \q(z) \mu(dz)=0
\end{multline*}
and
$$
\Prob_\mu \bigg(\max_{1\le j\le n-1} \xi_j<\xi_n \bigg) 
= \Prob_\mu^\infty \bigg( \max_{0\le j\le n-1} S_j\rho(\overline{x}) - S_n\rho(\overline{x})<0 \bigg)
= \Prob_\mu^\infty \bigg( \max_{1\le j\le n} - S_{-j} \rho(\sigma^n \overline{x})<0 \bigg)
$$
$$
=\Prob_\mu^\infty \bigg( \min_{1\le j\le n} S_{-j} \rho( \overline{x})>0 \bigg).
$$
By Corollary 1.14 \cite{Petersen} applied to $(\mathcal{X}, \mathcal{F}^\infty, \Prob_\mu^\infty, \sigma^{-1} )$ and $-\rho$ this converges to zero as $n\to \infty$, which is a contradiction with \eqref{E:4.3.2}.

Now we are ready to complete the proof of Lemma \ref{L:probability}. We have for $k\in\mathbb{Z}$ fixed
$$
\bigg\{  \textrm{$\mathcal{I}_{k}\subseteq \{\xi_0,\cdots, \xi_{2n-2n_0}\}$ }  \bigg\} \cap \{\tau_k>2n-2n_0 \} \subseteq \bigcap_{l\in \mathcal{I}_k} \{R^l_\alpha z\not\in G\},
$$
therefore
\begin{multline*}
\sum_{k \in \mathbb{Z}} \Prob_\mu(\xi_{2n}\in \mathcal{I}_k, \tau_k>2n-2n_0)\\
\le
\sum_{k \in \mathbb{Z}} \Prob_\mu \bigg(  \bigg\{ \textrm{there exists $l\in \mathcal{I}_{k}$ such that $l\not\in \{\xi_0,\cdots, \xi_{2n-2n_0}\}$}\bigg\}  \bigg)
+
\sum_{k \in \mathbb{Z}} \sum_{l\in \Z} \Prob_\mu ( R^l_\alpha z\not\in G )\\
=
\Prob_\mu \bigg(\bigg\{ \textrm{there exists $l\in \mathcal{I}_{\kappa(\omega)}$ such that $l\not\in \{\xi_0,\cdots, \xi_{2n-2n_0}\}$}\bigg\}  \bigg)
+\Prob_\mu( R^{\xi_{2n}}_\alpha z\not\in G).
\end{multline*}
The first summand is less than $\varepsilon$ for $n$ sufficiently large by \eqref{E:4.3.1}. The second one can be rewritten using the operator \eqref{E:Markov} as 
$$
\int_\T U^{2n} \mathds{1}_{\T\setminus G} (z) \mu(dz)
= \int_\T \mathds{1}_{\T\setminus G} (z) P^{2n}\mu(dz)
= \int_\T \mathds{1}_{\T\setminus G} (z) \mu(dz) = \mu(\T \setminus G)< \varepsilon
$$
by the choice of $n_0$. By \eqref{E:annealed} this completes the proof of Lemma \ref{L:probability}.

\bibliographystyle{plain}
\bibliography{Bibliography}

\end{document}